\documentclass[11]{siamltex}

\usepackage[pagewise]{lineno}

\newcommand{\nwc}{\newcommand}
\usepackage[pdftex]{graphicx}
\usepackage{nicefrac}
\usepackage{psfrag}
\usepackage{mathtools}
\usepackage{wrapfig}
\usepackage{epsf}
\usepackage{amsmath,amssymb}
\usepackage[font={footnotesize}]{caption} 
\usepackage{url}
\usepackage[mathscr]{euscript}
\usepackage{xcolor}
\usepackage{subcaption}
\usepackage{placeins}
\usepackage{tikz}
\usetikzlibrary{patterns}
\usepackage{pgfplots}
\tikzset{every picture/.style={line width=0.75pt}} 
\usepackage{hyperref}
\hypersetup{
  hidelinks,
  pdftitle={The alpha-Limit Problem: Convergence of a Linear Degenerate Interface Transmission Problem},
  pdfauthor={Toai Luong, Tadele Mengesha, Kerrek Stinson, Steven M. Wise, and Ming Hei Wong}
}

\renewcommand{\theequation}{\arabic{equation}}

\newcommand{\diff}{\mathrm{d}}

\newcommand{\barint}{\hbox{$\int$\kern-0.75\intwidth
\vrule width 0.5\intwidth height 2.4pt depth -2pt\kern0.25\intwidth}}
\newlength\intwidth
\setbox0=\hbox{$\int$}
\intwidth=\wd0

\newcommand\avint{\hbox{\hbox{$\displaystyle \int$}\hbox{\kern-.9em{$-$}}}}

\newcommand\smavint{\hbox{\hbox{$\int$}\hbox{\kern-.75em{$-$}}}}

\nwc{\st}{^{\mbox{\it st}}}

\nwc{\veloc}{v}
\nwc{\rhoc}{\beta}
\nwc{\hl}{\hat{L}}

\newcommand{\bfx}{\boldsymbol{x}}
\newcommand{\bfP}{\boldsymbol{P}}

\def\Xint#1{\mathchoice
{\XXint\displaystyle\textstyle{#1}}%
{\XXint\textstyle\scriptstyle{#1}}%
{\XXint\scriptstyle\scriptscriptstyle{#1}}%
{\XXint\scriptscriptstyle\scriptscriptstyle{#1}}%
\!\int}
\def\XXint#1#2#3{{\setbox0=\hbox{$#1{#2#3}{\int}$}
\vcenter{\hbox{$#2#3$}}\kern-.51\wd0}}

\def\dashint{\Xint-}

\nwc{\intRp}{\int_0^\infty}
\nwc{\aint}{\dashint}
\nwc{\aaint}{\dashint}

\newcommand{\cA}{{\cal A}}

\newcommand{\cE}{{\cal E}}
\newcommand{\cF}{{\cal F}}
\newcommand{\cH}{{\cal H}}

\newcommand{\cL}{{\cal L}}

\newcommand{\cW}{{\cal W}}

\newcommand{\R}{\mathbb R}

\newcommand{\be}{\begin{eqnarray}}
\newcommand{\ee}{\end{eqnarray}}

\newcommand{\ben}{\begin{eqnarray*}}
\newcommand{\een}{\end{eqnarray*}}

\newcommand{\x}[2]{\|\; #1 \;\|_{#2}}

\newcommand{\half}{\nicefrac{1}{2}}

\DeclareMathOperator{\Tr}{Tr}

\newcommand\restr[2]{{
		\left.\kern-\nulldelimiterspace 
		#1 
		\littletaller 
		\right|_{#2} 
}}

\newcommand{\littletaller}{\mathchoice{\vphantom{\big|}}{}{}{}}

\title{}
\author{}
\date{}

\numberwithin{equation}{section}
\usepackage{indentfirst}
\usepackage{stmaryrd}

\usepackage{nameref}

\begin{document}
	
\title{The $\alpha$-Limit Problem: Convergence of a Linear Degenerate Interface Transmission Problem}
\author{
	Toai Luong\thanks{Department of Mathematics and Applied Mathematics, Virginia Commonwealth University, Richmond, VA, USA. 
		Email: luongtt3@vcu.edu.
	} \and 
	Tadele Mengesha\thanks{Corresponding author, Department of Mathematics, The University of Tennessee, Knoxville, TN, USA. 
		Email: mengesha@utk.edu. \textbf{Funding:} NSF-DMS 2509059.
	} \and 
	Kerrek Stinson\thanks{Department of Mathematics, The University of Utah, Salt Lake City, UT, USA. 
		Email: kerrek.stinson@utah.edu.
	} \and 
	Steven M. Wise\thanks{Department of Mathematics, The University of Tennessee, Knoxville, TN, USA. 
	Email: swise1@utk.edu. \textbf{Funding:} NSF-DMS 2309547 and 2607995.
	} \and 
	Ming Hei Wong\thanks{Department of Mathematics, The University of Tennessee, Knoxville, TN, USA. 
		Email: mwong4@vols.utk.edu. 
	}
}

\maketitle

    \begin{abstract}
We study the singular limit of a family of linear degenerate interface transmission problems arising from a regularization procedure in the newly proposed Two-Parameter Diffuse Domain Method (DDM2p). For $\alpha>0$, the regularized problem admits a strictly convex variational formulation on $H^{1}(\Omega)$. In the limit $\alpha\to0$, the problem degenerates to a weakly coupled interface system with a nonstandard energy structure. To characterize the limit, we introduce a closed Hilbert subspace $\mathcal{H}\subset H^{1}(\Omega)$, defined through an auxiliary Helmholtz problem on an annular subdomain $\Omega_2\subset \Omega$, and identify the limiting energy functional $\mathcal{E}_{0}$ on $\mathcal{H}$. We prove that the regularized energies $\mathcal{E}_{\alpha}$ $\Gamma$-converge to $\mathcal{E}_{0}$ in the strong $L^{2}(\Omega)$ topology, using the standard framework. Consequently, minimizers of $\mathcal{E}_{\alpha}$ converge to the unique minimizer of $\mathcal{E}_{0}$, which is shown to be equivalent to the solution of the limiting interface problem. We further prove strong convergence $u_{\alpha}\to u_{0}$ in $H^{1}(\Omega)$ and establish an $O(\alpha)$ convergence rate. Numerical experiments in one spatial dimension confirm the predicted first-order convergence rate and suggest that this rate is sharp.
    \end{abstract}

    \begin{keywords}
Transmission boundary conditions, $\Gamma$-convergence, singular limits, diffuse domain methods, variational analysis, Hilbert space methods, energy minimization.
    \end{keywords}

    \section{Definition of the $\alpha$-Limit Problem}
For a given open domain {$\Omega_1$} with (at least) Lipschitz boundary $\Sigma := \partial \Omega_1$ and data $q \in [H^{1}(\Omega_1)]'$, the dual space of $H^{1}(\Omega_1)$, and $g \in H^{-\half}(\Sigma)$, the dual of the trace space of $H^{1}(\Omega_1)$,  we are interested in finding a function $u_{0}^{(1)}:\Omega_1 \to \mathbb{R}$ that satisfies
    \begin{equation}
    \tag{$P_0 \, .1$}
    \label{bvp-OG} 
    \begin{aligned}
-\Delta u_{0}^{(1)} + \gamma u_{0}^{(1)} &= q, \quad &&\text{in } \Omega_1, 
    \\
-\boldsymbol{n}_1\cdot  \nabla u_{0}^{(1)}  &= \kappa u_{0}^{(1)} + g, \quad &&\text{on } \Sigma = \partial\Omega_1.
    \end{aligned}
    \end{equation}
where $\gamma > 0$ and $\kappa \geq 0$ are given constants and {$\boldsymbol{n}_1$ is the outer normal on $\partial \Omega_1$.} The Robin boundary value problem (BVP) \eqref{bvp-OG} has a unique solution in $H^{1}(\Omega_1)$. Motivated by numerical methods for solving \eqref{bvp-OG} we embed this problem into a larger domain (see Section \ref{sec-motivate} below). Precisely, we {suppose $\Omega_1$ is compactly contained}
in a polygonal/polyhedral domain $\Omega$, as depicted in Figure~\ref{fig:domain}, and find the elliptic extension $u_{0}^{(2)}$ of $u_{0}^{(1)}$ in $\Omega_2 = \Omega\setminus \overline{\Omega_1}$ satisfying 
    \begin{equation}
    \tag{$P_0 \, .2$}
    \label{bvp-OG-ext}
    \begin{aligned}
-\Delta u_0^{(2)} + \beta u_0^{(2)}  &= 0, \quad &&\text{in } \Omega_2, 
    \\
u_0^{(2)} &= u_0^{(1)}, \quad &&\text{on } \Sigma = \partial\Omega_1, \\
{\boldsymbol{n}_2} \cdot \nabla u_0^{(2)}   & = 0, \quad && \text{on } \partial\Omega, 
    \end{aligned}
    \end{equation}
where $\beta\geq 0$ is a given constant and {$\boldsymbol{n}_2$ is the outer normal on $\partial \Omega$}. The mixed BVP has a unique solution once $u_0^{(1)}$ {is defined.}

We will denote the combined problem \eqref{bvp-OG} -- \eqref{bvp-OG-ext} by Problem \hypertarget{po}{\hyperlink{po}{$(P_0)$}} and refer to it as a \emph{weakly-coupled, two-sided interface} problem.  The function 
    \[
u_0(\bfx) = 
    \begin{cases}
u_0^{(1)}(\bfx),  &\text{if } \bfx \in  \Omega_1, 
    \\
u_0^{(2)}(\bfx),  &\text{if } \bfx \in  \Omega_2
	\end{cases}
    \]
belongs to $H^{1}(\Omega)$ and we will show later in this paper that it can be obtained as a minimizer of a quadratic energy functional over a particular Hilbert space. We will also prove that the coupled problem, Problem \hyperlink{po}{$(P_0)$}, can be found as the limit of a degenerate transmission problem.

Next, we {introduce the (nearly) degenerate transmission problem, which may be regarded} as a regularized version of the last problem. For $\alpha > 0$, we consider the following two-sided $\alpha-$dependent boundary value problem in $\Omega$: Find a function $u_\alpha: \Omega \to \R$ defined as
    \begin{align*}
u_\alpha(\bfx) = 
    \begin{cases}
u_\alpha^{(1)}(\bfx),  &\text{if } \bfx \in  \Omega_1, 
    \\
u_\alpha^{(2)}(\bfx),  &\text{if } \bfx \in  \Omega_2,
	\end{cases}
    \end{align*}
where $u_\alpha^{(1)}:\Omega_1 \to \R$ and $u_\alpha^{(2)}:\Omega_2 \to \R$ satisfy
    \begin{equation}
    \tag{$P_\alpha$}
    \label{eqn:alpha-prob}
    \begin{aligned}
-\Delta u_\alpha^{(1)} + \gamma u_\alpha^{(1)} &= q, \quad && \text{in } \Omega_1,  
    \\
-\alpha\Delta u_\alpha^{(2)}  + \alpha\beta u_\alpha^{(2)} &= 0, \quad && \text{in } \Omega_2, 
    \\
u_\alpha^{(1)} &= u_\alpha^{(2)}, \quad && \text{on } \Sigma = \partial\Omega_1,  
    \\
- \boldsymbol{n}_1 \cdot  \nabla (u_\alpha^{(1)} - \alpha u_\alpha^{(2)}) & = \kappa u_\alpha^{(1)} + g, \quad && \text{on } \Sigma = \partial\Omega_1, 
	\\
 \alpha \, {\boldsymbol{n}_2} \cdot \nabla u_\alpha^{(2)}   & = 0, \quad && \text{on } \Sigma = \partial\Omega.
    \end{aligned}
    \end{equation}
This problem is strongly coupled: $u_\alpha^{(1)}$ cannot be computed independently from $u_\alpha^{(2)}$, and vice versa. It is not difficult to see that a solution $u_\alpha$ of \eqref{eqn:alpha-prob} corresponds to a minimizer of the associated energy functional $\cE_\alpha: H^1(\Omega) \to \R$, defined by
    \begin{equation}
    \label{E_alpha-energy}
\cE_\alpha[u] = \int_\Omega   \frac{1}{2} \left( D_\alpha |\nabla u|^2 + c_\alpha u^2 \right)\mathrm{d}\bfx 	+ \int_{\Sigma} \frac{1}{2} \kappa u^2 {\mathrm{d}S} - \langle {q_0},  u\rangle_{\Omega} + \langle g,u\rangle_{\Sigma},
    \end{equation}
for $u \in H^1(\Omega)$.  Here, $D_\alpha = \chi_1 + \alpha \chi_2$, $c_\alpha = \gamma \chi_1 + \alpha\beta\chi_2$, and ${q_0} = q \chi_1$; 
where $\chi_1$ and $\chi_2$ denote the characteristic functions of the sets $\Omega_1$ and $\Omega_2$, respectively, and $\langle \,\cdot\, , \,\cdot\, \rangle_{\Omega}$ and $\langle \,\cdot\, , \,\cdot\, \rangle_{\Sigma}$ denote the duality pairings in  $H^{1}(\Omega)$ and $H^{1/2}(\Sigma)$ respectively. 
    
    \begin{figure}[htb!]
	\centering
	\begin{tikzpicture}[x=0.75pt,y=0.75pt,yscale=-1,xscale=1]
\draw  [line width=2.25]  (272,46) -- (423.67,46) -- (423.67,198.17) -- (272,198.17) -- cycle ;
\draw  [fill={rgb, 255:red, 184; green, 178; blue, 178 }  ,fill opacity=1 ] (380,107.33) .. controls (417,84.33) and (403,159.33) .. (348,163.33) .. controls (293,167.33) and (284,82.33) .. (314,96.33) .. controls (344,110.33) and (343,130.33) .. (380,107.33) -- cycle ;
\draw [line width=0.75]    (325.67,103) -- (338.16,81.59) ;
\draw [shift={(339.67,79)}, rotate = 120.26] [fill={rgb, 255:red, 0; green, 0; blue, 0 }  ][line width=0.08]  [draw opacity=0] (6.25,-3) -- (0,0) -- (6.25,3) -- cycle    ;
\draw [line width=0.75]    (424.17,120) -- (448.33,120.22) ;
\draw [shift={(451.33,120.25)}, rotate = 180.53] [fill={rgb, 255:red, 0; green, 0; blue, 0 }  ][line width=0.08]  [draw opacity=0] (6.25,-3) -- (0,0) -- (6.25,3) -- cycle    ;
		
\draw (337,130) node [anchor=north west][inner sep=0.75pt]    {$\Omega _{1}$};
\draw (290,165) node [anchor=north west][inner sep=0.75pt]    {$\Omega _{2}$};
\draw (342.33,68) node [anchor=north west][inner sep=0.75pt]    {$\boldsymbol{n}_{1}$};
\draw (454.83,115) node [anchor=north west][inner sep=0.75pt]    {$\boldsymbol{n}_{2}$};
		
	\end{tikzpicture}
\caption{We consider a bounded, open, simply connected domain $\Omega_1$ that is compactly contained in a bounded, open, polygonal domain $\Omega$. Set $\Omega_2 \coloneqq \Omega \setminus \overline{\Omega_1}$ and $\Sigma \coloneqq \partial \Omega_1$. Generalizations to $\R^n$ should be clear.}
	\label{fig:domain}
    \end{figure}

In this paper, our primary goal is to show that $u_\alpha \to u_0$, as $\alpha \searrow 0$; the notions of convergence will be made precise later. In general terms, the collection of questions surrounding the convergence of solutions of \eqref{eqn:alpha-prob} to solutions of Problem~\hyperlink{po}{$(P_0)$} (i.e., \eqref{bvp-OG} -- \eqref{bvp-OG-ext}) is what we refer to as the \emph{$\alpha$-Limit Problem}.

We remark in passing that while we frame our results in terms of the constant elliptic coefficient $D_\alpha$, our results would naturally adapt to the case of elliptic variable coefficient $\mathbb{D}_\alpha = \mathbb{A}\chi_1 + \alpha \mathbb{B}\chi_2$, where $\mathbb{A}, \mathbb{B}\in L^\infty(\Omega;\R^{d\times d})$ are such that $\mathbb{A}(x)$ and $\mathbb{B}(x)$ are symmetric positive-definite with a spatially uniform lower bound on the smallest eigenvalue.

This paper is organized as follows: in Section~\ref{sec-motivate} we motivate the $\alpha$-Limit Problem from a larger program focused on the diffuse domain method (DDM).  In Section~\ref{sec:Helmholtz-prob}, we explore the properties of the solution to a Helmholtz problem in an annulus with mixed boundary conditions. In Section~\ref{sec:cond-A},  we introduce a closed subspace of the Hilbert space $H^1(\Omega)$, denoted $\mathcal{H}$, over which the energy functional associated to Problem~{\hyperlink{po}{$(P_0)$}} is minimized. We also define the extension and restriction operators that will be used later in our analyses. In Section~\ref{sec:energy}, we define the energy functional, denoted $\cE_0$, associated to Problem~{\hyperlink{po}{$(P_0)$}}, and in Section~\ref{sec:Gamma-conv}, after restricting $\cE_\alpha$ to $\cH$ and extending the resulting functionals by $+\infty$ to $L^2(\Omega)$, we define $\cF_\alpha$ and $\cF_0$ and prove $\cF_\alpha \overset{\Gamma}{\longrightarrow} \cF_0$ as $\alpha\searrow 0$ in the strong $L^2(\Omega)$ topology.  While the Gamma convergence implies the convergence of minimizers with respect to the $L^2$-topology, in Section~\ref{sec:H1-conv}, we prove an alternative, improved convergence: $u_\alpha\to u_0$, as $\alpha\searrow 0$, strongly in $H^1$. This improved convergence is at least order one in $\alpha$. Finally, we conclude the paper in Section~\ref{sec:numerical}, which is dedicated to numerical experiments in one spatial dimension. The simulation results suggest that the predicted first-order convergence in $\alpha$ is sharp.

    \section{Motivation of the $\alpha$-Limit Problem}
    \label{sec-motivate}

The $\alpha$-Limit Problem, described in the last section and the main focus of the paper, arises within the context of a larger theoretical program, namely, the convergence analysis of a diffuse domain method (DDM), which is, generally speaking, a method for approximating solutions to primarily elliptic problems on domains with complicated boundary shapes. 

The concept of the DDM seems to have been first introduced by Kockelkoren and Levine \cite{Kockelkoren-DDM2003} for studying diffusion within a cell with zero Neumann boundary conditions at the cell boundary.   Since then, DDMs have been applied to model electrical waves in the heart~\cite{Fenton-DDM2005};  membrane-bound Turing patterns~\cite{Levine-DDM2005}; deformation of bone~~\cite{aland2012adaptive}; tumor growth~\cite{Lowengrub-DDM2014}; grain boundary transport~\cite{Thornton-DDM2016}; osmosis~\cite{Ratz-DDM2016}; corrosion~\cite{Thornton-DDM2018}; fluid dynamics and fluid-structure interactions~\cite{Voigt-DDM2014, Voigt-DDM2010, Voigt-DDM2012, MR4642032,  Wise-DDM2021, Voigt-DDM2011-fluid, teigen2009diffuse}; and multicomponent vesicles~\cite{Voigt-DDM2006,Tang2025Vesicles}. Rigorous and asymptotic analyses of DDMs (and related methods) for solving elliptic PDEs in domains with complex boundaries subject to Dirichlet, Neumann, and Robin boundary conditions are provided in \cite{Abels-DDM2015, burger2015, burger2017, franz2012, Li-DDM2009, lervag2015analysis, schlottbom2016}. Rigorous numerical analysis of the DDM for solving parabolic problems has been carried out in~\cite{hao2026}. Very recently, Benfield and Dedner released the numerical software package DDFEM~\cite{benfield2025}, for solving diffuse domain problems using a finite element framework.

In this section,  we will introduce the DDM and explain how the $\alpha$-Limit Problem arises from within its analysis.  To do this, we refer back to Figure~\ref{fig:domain} repeatedly in this section. In any event, the $\alpha$-Limit Problem is an {interesting} problem in its own right and can be appreciated without any understanding of DDM.

    \medskip

\noindent\textbf{A Simple One-Sided Problem:} Suppose that our primary aim is to find a simple, practical approximation of the following one-sided problem: find the function $u:\Omega_1 \to \mathbb{R}$ that satisfies
\begin{equation} \label{bvp1-1-1}     
    \begin{aligned}
-\Delta u + \gamma u &= q, \quad &&\text{in } \Omega_1, 
    \\
-\boldsymbol{n}_1\cdot  \nabla u  & = \kappa u + g, \quad &&\text{on } \Sigma = \partial\Omega_1.
    \end{aligned}
    \end{equation}
This problem is defined on the bounded, simply connected domain $\Omega_1$, and, of course, under very reasonable assumptions on the data, a unique solution is guaranteed. However, to make the problem interesting, we assume that the boundary, $\Sigma = \partial \Omega_1$, is very complicated, that is, not polygonal, circular, or otherwise simple.  Indeed, the main difficulty of this problem is the non-triviality of the boundary.

Of course, to find an approximate solution to this problem we could use a finite element method with a boundary-fitted mesh. In fact, one of the main selling points of the finite element method is that it can be used to approximate solutions on domains with highly complex (non-rectangular) shapes. However, the finite element method requires the construction of a very intricate triangulation of $\Omega_1$, which is a highly nontrivial algorithmic task. Furthermore, the mesh rarely ever fits exactly to the boundary, and, therefore, there is an extra source of error because of this misfit. 

There are alternatives to the finite element approximation method. Here, {we briefly introduce the {\it $2$-parameter diffuse domain method} (DDM2p) for approximating solutions to  {\eqref{bvp1-1-1}.}} In our construction, there are four steps in the creation of the diffuse domain approximation problem: (i) domain embedding, (ii) $\alpha$ regularization, (iii) singular reformulation, and (iv) $\varepsilon$ regularization. \emph{Our methodology is slightly different from the works referenced above, in that, to our knowledge, no other construction methods explicitly contain step (ii).} {It is for this reason that we highlight the second parameter $\alpha$ in the method's name. From the perspective of application, the inclusion of $\alpha$ is essential for computing solutions of the DDM (and is actually present in the numerics associated with prior analyses). With this clarification out of the way, we will simply refer to DDM2p by DDM when there is no confusion.}

    \medskip

\noindent\textbf{Step 1: Domain embedding:} Since $\Omega_1$ is bounded, naturally, we can find an open rectangular domain $\Omega$ that compactly contains $\Omega_1$. Consider the following problem: find a pair of functions, $u_{0,0}^{(1)}:\Omega_1 \to \mathbb{R}$ and $u_{0,0}^{(2)}:\Omega_2 \to \mathbb{R}$, that satisfy
    \begin{equation}
    \tag{$P_{0,0}$}
    \label{eqn:0-0-prob}
    \begin{aligned}
-\Delta u_{0,0}^{(1)} + \gamma u_{0,0}^{(1)} &= q, \quad &&\text{in } \Omega_1,  
    \\
-\Delta u_{0,0}^{(2)} + \beta u_{0,0}^{(2)} &= 0, \quad &&\text{in } \Omega_2, 
    \\
 u_{0,0}^{(1)} &= u_{0,0}^{(2)}, \quad &&\text{on } \Sigma = \partial\Omega_1,  
    \\
- \boldsymbol{n}_1\cdot  \nabla u_{0,0}^{(1)}  & = \kappa u_{0,0}^{(1)} + g, \quad &&\text{on }\Sigma = \partial\Omega_1, 
    \\
\boldsymbol{n}_2 \cdot \nabla u_{0,0}^{(2)}  & = 0, \quad &&\text{on } \partial\Omega.
    \end{aligned}
    \end{equation}
We write $u_{0,0} \coloneqq u_{0,0}^{(1)} \chi_1 + u_{0,0}^{(2)} \chi_2$. Problem~\eqref{eqn:0-0-prob} {subsumes} \eqref{bvp1-1-1}, though it now contains ancillary information, specifically information about the problem in $\Omega_2$. The interior problem on $\Omega_1$ is completely decoupled from the problem on the annular domain $\Omega_2$. The solution to the interior problem, $u_{0,0}^{(1)}$, is also the solution of \eqref{bvp1-1-1}. Once $u_{0,0}^{(1)}$ is found, $u_{0,0}^{(2)}$ can be computed as the annular Helmholtz function satisfying the Dirichlet boundary conditions $u_{0,0}^{(2)} = u_{0,0}^{(1)}$ on $\Sigma$, and homogeneous Neumann boundary conditions on $\partial\Omega$, the outer boundary. 

This problem has the advantage that it is equivalent to {\eqref{bvp1-1-1},} but it is posed on a rectangular composite domain $\Omega$, which can be covered by a uniform, rectangular mesh.  A clear disadvantage of Problem~\eqref{eqn:0-0-prob} is that it lacks a single coercive energy formulation on $H^1(\Omega)$ of the type naturally suited to a standard conforming finite element treatment of the coupled problem. The two component problems nevertheless admit weak formulations on their respective subdomains.

We point out that, in our framework, one can choose a non-rectangular domain (e.g., a polygonal/polyhedral domain) in which to embed $\Omega_1$ {when} there is a compelling reason to do so.
But a simple rectangular embedding suits our purposes for this discussion.

    \medskip

\noindent\textbf{Step 2: $\alpha$  Regularization:} 
 Now, let $\alpha\in(0,1)$ be given. Consider the following problem: find $u_{0,\alpha}^{(1)}:\Omega_1 \to \R$ and $u_{0,\alpha}^{(2)}:\Omega_2 \to \R$ that satisfy
    \begin{equation}
    \tag{$P_{0,\alpha}$}
    \label{eqn:alpha-0-prob}
    \begin{aligned}
-\Delta u_{0,\alpha^{(1)}} + \gamma u_{0,\alpha}^{(1)} &= q, \quad && \text{in } \Omega_1,  
    \\
-\alpha\Delta u_{0,\alpha}^{(2)}  + \alpha\beta u_{0,\alpha}^{(2)} &= 0, \quad && \text{in } \Omega_2, 
    \\
u_{0,\alpha}^{(1)} &= u_{0,\alpha}^{(2)}, \quad && \text{on } \Sigma = \partial\Omega_1,  
    \\
- \boldsymbol{n}_1 \cdot  \nabla (u_{0,\alpha}^{(1)} - \alpha u_{0,\alpha}^{(2)})  & = \kappa u_{0,\alpha}^{(1)} + g, \quad && \text{on } \Sigma = \partial\Omega_1, 
	\\
\alpha \, {\boldsymbol{n}_2} \cdot \nabla u_{0,\alpha}^{(2)}   & = 0, \quad && \text{on } \partial\Omega.
    \end{aligned}
    \end{equation}
We write $u_{0,\alpha} \coloneqq u_{0,\alpha}^{(1)} \chi_1 + u_{0,\alpha}^{(2)} \chi_2$. At least formally, we expect that $u_{0,\alpha} \to u_{0,0}$, as $\alpha\searrow 0$ in some topology. In fact, this convergence is the subject of the present paper. Note that problem~\eqref{eqn:alpha-0-prob} is equivalent to problem~\eqref{eqn:alpha-prob}. The need for the extra subscript in the former will become clear shortly.

Now, let us observe that the solution to Problem~\eqref{eqn:alpha-0-prob} minimizes the energy functional 
    \begin{align*}
\cE_{0,\alpha}[u] = \int_{\Omega}  \frac{1}{2}(D_{0,\alpha} |\nabla u|^2 + c_{0,\alpha} u^2)\mathrm{d}\bfx   + \int_{\Sigma}  \frac{1}{2}\kappa u^2 \mathrm{d} S - \langle {q_0}, u\rangle_{\Omega} +\langle g, u\rangle_{\Sigma},  
    \end{align*}
for all $u \in H^1(\Omega)$, where
    \begin{equation}
D_{0,\alpha}  \coloneqq\chi_1  + \alpha\chi_2 , \quad c_{0,\alpha}  \coloneqq \gamma\chi_1  + \alpha \beta\chi_2 , \quad {q_0}  \coloneqq q \chi_1   .	
    \label{eqn:d0-c0-f0}
    \end{equation}
Conversely, since $\cE_{0,\alpha}$ is coercive and strictly convex, it admits a unique minimizer $u_{0,\alpha}$ over $H^1(\Omega)$, which solves Problem \eqref{eqn:alpha-0-prob}. This is the sense in which the problem has been regularized: there is a standard energy minimization problem associated to Problem \eqref{eqn:alpha-0-prob}. This is not the case for Problem~\eqref{eqn:0-0-prob}, whose energy structure is quite non-standard, as we shall see. 

    \medskip

\noindent\textbf{Step 3: Singular Reformulation:} A solution to the two-sided problem \eqref{eqn:alpha-0-prob} can be understood in the weak sense. Again, let $\chi_i$ be the characteristic function for the domain $\Omega_i$, $i = 1, 2$, relative to the open set $\Omega$. If $(u_{0,\alpha}^{(1)}, u_{0,\alpha}^{(2)}) \in C^2(\overline{\Omega_1}) \times C^2(\overline{\Omega_2})$ is a solution pair to the two-sided problem \eqref{eqn:alpha-0-prob}, then $u_{0,\alpha} \coloneqq u_{0,\alpha}^{(1)} \chi_1 + u_{0,\alpha}^{(2)} \chi_2$ is in the space  $H^1(\Omega)\cap C^0(\overline{\Omega})$ and satisfies
    \begin{align} \label{weak-form}
\int_\Omega (D_{0,\alpha} \nabla u_{0,\alpha} \cdot \nabla w + c_{0,\alpha} u_{0,\alpha} w )\mathrm{d}\bfx  + \int_{\Sigma}\kappa u_{0,\alpha}w\, \mathrm{d}S - \langle {q_0}, w\rangle_{\Omega} +\langle g, w\rangle_{\Sigma} = 0,
    \end{align}
for any $w \in H^1(\Omega)$. The weak formulation suggests an equivalent singular formulation, which can be understood in the sense of distributions: find $u_{0,\alpha}\in H^1(\Omega)$ such that
    \begin{equation}
	\begin{split}
-\nabla \cdot (D_{0,\alpha} \nabla u_{0,\alpha}) + c_{0,\alpha} u_{0,\alpha} + (\kappa u_{0,\alpha} + g_{\mathrm{ext}} )\delta_\Sigma &= {q_0}, \quad \text{in } \Omega,  
    \\
\alpha \nabla u_{0,\alpha} \cdot \boldsymbol{n}_2 & = 0, \quad \text{on } \partial\Omega,
	\end{split}
    \tag{$P'_{0,\alpha}$}
    \label{eqn:alpha-0-prob-prime}
    \end{equation} 
where $\delta_\Sigma$ is the surface delta distribution {(equivalently, the Hausdorff surface measure)} with respect to the interface $\Sigma$ and $g_{\mathrm{ext}} \in [H^{1}(\Omega)]'$ is any distribution whose restriction on $\Sigma$ corresponds to $g$. For $g\in L^{p}(\Sigma)$, $1\leq p\leq \infty$, a simple extension is given in \cite[Section 2.3]{Abels-DDM2015} and several places elsewhere. {The extension is constructed as follows:} Suppose that $r(\bfx)$ is the signed distance function relative to $\Omega_1$, {which is assumed to be positive within $\Omega_1$ and negative outside $\overline{\Omega_1}$.} Define  
    \[
\bfP_{\Sigma}(\bfx) \coloneqq  \bfx - r(\bfx)\nabla r(\bfx),
    \]    
the orthogonal projection operator onto $\Sigma$. Then we set
    \[
g_{\mathrm{ext}} (\bfx) \coloneqq g(\bfP_{\Sigma}(\bfx)),
    \]
which is the constant extension of $g$ into a tubular neighborhood along the normal trajectories of $\Sigma$.  If $g\in H^{-1/2}(\Sigma)$ is a distribution, its extension $g_{\mathrm{ext}}$ can be defined weakly by 
its action via duality on smooth test functions $\psi \in C_0^\infty(\Omega_\delta)$ within a narrow tubular neighborhood $\Omega_\delta = \{x \in \Omega \mid |r(\bfx)| < \delta\}$.

This singular reformulation is vital, because it is the basis for the DDM, as we show below.

    \medskip

\noindent\textbf{Step 4: $\varepsilon$ Regularization:} Problem \eqref{eqn:alpha-0-prob-prime} is still no easier to solve than the original problem. However, one more modification of the problem yields the diffuse domain approximation, which is quite straightforwardly solved. 

The pillars of the diffuse domain method (DDM) are approximations of the characteristic function $\chi_1$ and the surface delta function $\delta_\Sigma$ in the singular formulation of Problem \eqref{eqn:alpha-0-prob-prime}. A natural approximation for $\chi_1$ is the diffuse interface function
    \begin{align*}
\phi_\varepsilon(\bfx) \coloneqq \frac{1}{2}\left[ 1 + \tanh \left( \frac{r(\bfx)}{\varepsilon}\right) \right] \approx\chi_1(\bfx) = 
    \begin{cases}
1,  &\text{if } \bfx \in  \Omega_1, 
    \\
0,  &\text{if } \bfx \in  \Omega\setminus \Omega_1,
	\end{cases}
    \end{align*} 
where $\varepsilon > 0$ is a small parameter that controls the interface ``thickness," and $r(\bfx)$ is the signed distance function {as before.}  For simplicity, we assume that $\Sigma\coloneqq \partial\Omega_1$ is smooth so that the signed distance function, $r$, is well-defined and smooth in a neighborhood of the boundary.  Next, we use the approximation $|\nabla\phi_\varepsilon| \approx \delta_\Sigma$ for the surface delta function. The motivation for this approximation is the fact that $|\nabla\phi_\varepsilon|$ converges to $\delta_\Sigma$ in the sense of measures. Note that there are many possible choices for this approximation, see \cite{Abels-DDM2015}.

For each $\varepsilon,\alpha \in (0,1)$, the DDM approximation of problem \eqref{eqn:alpha-0-prob-prime} is given as follows: find a function $u_{\varepsilon,\alpha} : \Omega \to \mathbb{R}$ that satisfies
    \begin{equation}
	\begin{aligned}
-\nabla \cdot (D_{\varepsilon,\alpha} \nabla u_{\varepsilon,\alpha}) + c_{\varepsilon,\alpha} u_{\varepsilon,\alpha} + \left(\kappa u_{\varepsilon,\alpha}  + g_{\mathrm{ext}} \right)|\nabla\phi_\varepsilon| &= {q_{\varepsilon}}, \quad && \text{in } \Omega,  
    \\
D_{\varepsilon,\alpha} \nabla u_{\varepsilon,\alpha} \cdot \boldsymbol{n}_2 & = 0, \quad && \text{on } \partial\Omega,
    \end{aligned}
    \tag{$P_{\varepsilon,\alpha}$}
    \label{eqn:alpha-epsilon-prob}
    \end{equation}
where
    \begin{equation}
D_{\varepsilon,\alpha}  \coloneqq \alpha + (1 - \alpha)\phi_\varepsilon , \quad c_{\varepsilon,\alpha}  \coloneqq \alpha \beta + (\gamma - \alpha \beta)\phi_\varepsilon, \quad {q_{\varepsilon}}  \coloneqq   {q_0} \phi_\varepsilon, 
    \label{eqn:de-ce-fe}
    \end{equation}
and $g_{\mathrm{ext}}\in [H^{1}(\Omega)]'$ is as defined in the previous step.
Note that  $g_{\mathrm{ext}}|\nabla\phi_\varepsilon| \in [H^{1}(\Omega)]'$ with the property that 
    \[
\lim_{\varepsilon\to 0}\langle g_{\mathrm{ext}}|\nabla\phi_\varepsilon|, \psi \rangle_{\Omega} = \lim_{\varepsilon\to 0}\langle g_{\mathrm{ext}}, |\nabla\phi_\varepsilon| \psi \rangle_{\Omega} = \langle  g, \mathrm{Tr}(\psi)\rangle_{\Sigma},\quad \forall \, \psi\in H^{1}(\Omega).
    \]
For each pair $(\varepsilon,\alpha)$, a solution $u_{\varepsilon,\alpha}$ of problem \eqref{eqn:alpha-epsilon-prob}  minimizes the associated energy functional, $\cE_{\varepsilon,\alpha}$, defined by
    \begin{equation}
    \label{eqn:DDM-2Side-energy}
\cE_{\varepsilon,\alpha} [u] = \int_\Omega \left[ \frac{1}{2} (D_{\varepsilon,\alpha} |\nabla u|^2 + c_{\varepsilon,\alpha} u^2)  +  \frac{1}{2} \kappa u^2|\nabla\phi_\varepsilon|\right]\, \mathrm{d}\bfx +  {\langle g_{\mathrm{ext}}|\nabla\phi_\varepsilon|- {q_{\varepsilon}}, u\rangle_{\Omega}},  
    \end{equation}
for all $u \in H^1(\Omega)$. Since $\cE_{\varepsilon,\alpha}$ is coercive and strictly convex, it has a unique minimizer, which implies that the DDM problem \eqref{eqn:alpha-epsilon-prob} has a unique solution $u_{\varepsilon,\alpha} \in H^1(\Omega)$.

The major advantage of the DDM is that traditional finite element and finite difference codes can be used with no modification. The only requirement is that the signed distance function for the interface $\Sigma$ must be computed as a preprocessing step. This can be accomplished by solving the Eikonal equation~\cite{luong25a}.  Structured, locally Cartesian finite element and finite difference grids can be used, and, consequently, fast geometric multigrid methods can be used to efficiently obtain numerical approximations. The downside of DDM is that the mesh must resolve the steep gradient in the diffuse interface function $\phi_\varepsilon$. However, adaptive, locally refined, multilevel meshes can mitigate this issue.

Note that, with our DDM construction, the choice of $\Omega$ is independent of $\varepsilon$ and $\alpha$ and, importantly, the stiffness matrices in the finite element and finite difference methods have diagonal elements that are bounded below by $\alpha$, in contrast to other methods that, without any intervention, have exponentially small diagonal elements.

\begin{figure}[t!]
    \centering
    \begin{tikzpicture}[x=2.5in,y=2.0in,line cap=round,line join=round]
        \useasboundingbox (-0.4,-0.3) rectangle (1.5,1.4);

        \draw[->, line width=0.9pt] (-0.04,0) -- (1.2,0) node[below] {$\varepsilon$};
        \draw[->, line width=0.9pt] (0,-0.04) -- (0,1.15) node[left] {$\alpha$};

        \draw[line width=0.8pt] (0,0) -- (1,0) -- (1,1) -- (0,1) -- cycle;

        \node[below left] at (0,0) {$u_{0,0}$};
        \node[above left] at (0,1) {$u_{0,\alpha}$};
        \node[above right] at (1,1) {$u_{\varepsilon,\alpha}$};
        \node[below right] at (1,0) {$u_{\varepsilon,0}$};

        \draw[->, line width=0.7pt] (0.9, 1.06) -- (0.1, 1.06);     
        \draw[->, line width=0.7pt] (-0.06, 0.9) -- (-0.06, 0.1);   
        \draw[->, line width=0.7pt] (1.06, 0.9) -- (1.06, 0.1);     
        \draw[->, line width=0.7pt] (0.9, -0.06) -- (0.1, -0.06);   

        \node[above, align=center, font=\small] at (0.5, 1.25) {
            $\mathcal{E}_{\varepsilon,\alpha} \xrightarrow{\Gamma} \mathcal{E}_{0,\alpha}$ 
        };
        \node[above, align=center, font=\small] at (0.5, 1.15) {
            $u_{\varepsilon,\alpha} \xrightarrow[]{\varepsilon \searrow 0} u_{0,\alpha}$
        };

        \node[above, align=center, font=\small] at (0.5, 1.08) {
(References~\cite{luong25a,luong25b})
        };

        \node[left, align=center, font=\small] at (-0.09, 0.70) {
            $\mathcal{E}_{0,\alpha} \xrightarrow{\Gamma} \mathcal{E}_{0,0}$ 
        };
         \node[left, align=center, font=\small] at (-0.09, 0.5) {
            $u_{0,\alpha} \xrightarrow[]{\alpha \searrow 0} u_{0,0}$ \\
            \rule{0pt}{3ex}$\alpha$-Limit Problem \\
            (this paper)
        };

        \node[right, align=center, font=\small] at (1.09, 0.5) {
            $u_{\varepsilon,\alpha} \xrightarrow{\alpha \searrow 0} u_{\varepsilon,0}$
        };
        \node[right, align=center, font=\small] at (1.09, 0.4) {
            (utility unknown)
        };

        \node[below, align=center, font=\small] at (0.5, -0.08) {
            (not practically \\ computable)
        };

        \node[align=center, font=\small] at (0.45, 0.70) {
            (future work)
        };

        \draw[line width=1pt] plot[domain=1:0.4, samples=50] (\x, {\x^2.2});
        \draw[->, line width=1pt] plot[domain=0.4:0, samples=50] (\x, {\x^2.2});

        \node[above, align=center, font=\small] at (0.45, 0.55) {
            $\mathcal{E}_{\varepsilon,\alpha(\varepsilon)} \xrightarrow{\Gamma} \mathcal{E}_{0,0}$ 
        };
        \node[above, align=center, font=\small] at (0.45, 0.45) {
           $u_{\varepsilon,\alpha(\varepsilon)} \xrightarrow[]{\varepsilon \searrow 0} u_{0,0}$
        };

        \node[right, font=\small] (alphadef) at (0.55, 0.23) {$\alpha(\varepsilon)$};

    \end{tikzpicture}
    \caption{Convergence diagram for the Two-Parameter Diffuse Domain Method (DDM2p). The refinement path of greatest utility for the DDM seems to be $(\varepsilon,\alpha(\varepsilon))$, where $\alpha :[0,1)\to [0,1)$ is smooth, positive, and satisfies $\alpha(0) = 0$, for example, $\alpha = \varepsilon^2$, as shown. The refinement path $(\varepsilon,\alpha = 0)$,  $\varepsilon\in (0,1)$, is not practical in DDM because the stiffness matrices in the finite element and finite difference approximations would have diagonal elements that are exponentially small in $\Omega_2$.}
    \label{fig:convergence-diagram}
\end{figure}

    \medskip

\noindent\textbf{Convergence Questions:}  Our constructions above lead us to consider several theoretical questions:
    \begin{enumerate}
    \item[Q1.]
Is there an energy, labeled $\cE_{0,0}$, say, associated to Problem~\eqref{eqn:0-0-prob}?

    \item[Q2.]
If so, is the following Gamma convergence valid: ${\cE_{0,\alpha}} \overset{\Gamma}{\longrightarrow} \cE_{0,0}$, as $\alpha \searrow 0$?  With respect to what topological space?
    \item[Q3.] 
Is the following convergence true: ${u_{0,\alpha}}\longrightarrow u_{0,0}$, as $\alpha\searrow 0$?  In what sense? At what order in $\alpha$?
    \end{enumerate}
See the convergence diagram in Figure~\ref{fig:convergence-diagram}. The {previous} three questions are the subject of this paper and are collectively referred to as the $\alpha$-Limit Problem. However, this is only one component in a larger program of inquiry.

In a series of two papers~\cite{luong25a,luong25b}, we proved a Gamma convergence result, ${\cE_{\varepsilon,\alpha}} \overset{\Gamma}{\longrightarrow} {\cE_{0,\alpha}}$,  for fixed $\alpha >0$, as $\varepsilon\searrow 0$. We also proved the convergence ${u_{\varepsilon,\alpha}}\longrightarrow {u_{0,\alpha}}$ in $H^1(\Omega)$, as $\varepsilon\searrow 0$, also for fixed $\alpha>0$. We were not able to theoretically establish rates of convergence. However, we proved asymptotic convergence at first order in $\varepsilon$, which was supported by computations.

The broader program connected to the DDM seeks to answer the following:
    \begin{enumerate}
    \item[Q4.]
Is the following two-parameter Gamma convergence valid: ${\cE_{\varepsilon,\alpha}} \overset{\Gamma}{\longrightarrow} \cE_{0,0}$, as $(\varepsilon,\alpha) \to  (0,0)$? With respect to what topological space?
    \item[Q5.]
Suppose $p\in(0,\infty)$. Is the following one-parameter Gamma convergence valid: ${\cE_{\varepsilon,\varepsilon^p}} \overset{\Gamma}{\longrightarrow} \cE_{0,0}$, as $\varepsilon\searrow 0$? With respect to what topological space?
    \item[Q6.]
Is the following two-parameter convergence valid: ${u_{\varepsilon,\alpha}}\longrightarrow u_{0,0}$, as $(\varepsilon,\alpha)\to (0,0)$? In what sense? At what orders with respect to $\alpha$ and $\varepsilon$?
    \item[Q7.]
Is the following convergence valid: ${u_{\varepsilon,\varepsilon^p}} \longrightarrow u_{0,0}$, as $\varepsilon\searrow 0$? In what sense? At what order with respect to $\varepsilon$?
    \item[Q8.]
Is the following formal convergence valid: ${u_{\varepsilon,\varepsilon^p}}  \longrightarrow u_{0,0}$, asymptotically as $\varepsilon\searrow 0$? If so, at what order with respect to $\varepsilon$? This is a classic {\it corner layer} problem within the field of matched asymptotic analysis.
    \end{enumerate}

From the point of view of practical DDM computations, it is most natural to simultaneously decrease $\varepsilon$ and $\alpha$ to 0, along some predetermined refinement path. Therefore, the convergence questions Q5, Q7, and Q8 are of high importance. The exploration of the $\alpha$-Limit Problem is expected to shed some light on these and other questions regarding the convergence properties of DDM. \emph{Most importantly, perhaps, the present work defines the target energy $\cE_{0,0}$, which will be needed in all of the subsequent analyses of the Two-Parameter DDM.}

    \section{A Helmholtz Problem with Mixed Boundary Conditions}
    \label{sec:Helmholtz-prob}
    

In order to describe the proper energy structure of problem~{\hyperlink{po}{$(P_0)$}}, we need some facts about the Helmholtz problem in the annular domain $\Omega_2$. For the sake of generality, let us suppose that  $U \subset \R^n$ is a connected, open, bounded domain, such that $\partial U$ consists of two {disjoint sets} $\Gamma_1$ and $\Gamma_2$, each of positive $(n-1)$--dimensional Lebesgue measure. Assume that $\Gamma_1$ is {relatively open and its relative closure is locally described by Lipschitz graphs}.

    \begin{proposition}
    \label{prop:Helmholtz}
For any given function $f \in H^{\half}(\Gamma_1)$,  the  problem
    \begin{equation}     
    \label{helmholtz-eqn}
	\begin{split}
-\Delta u +\beta u &= 0, \quad     \text{in } U, 
    \\
u &= f, \quad \text{on } \Gamma_1, 
    \\
\boldsymbol{n}  \cdot   \nabla u  &= 0, \quad \text{on } \Gamma_2,
    \end{split}
    \end{equation} 
has a unique weak solution $u_f \in H^1(U)$.  Moreover, there exists a constant $C_0 > 0$, depending only on the domain $U$  and $\beta$, such that
    \begin{align*}
\|u_f\|_{H^1(U)} \leq C_0\|f\|_{H^{\half}(\Gamma_1)}.
    \end{align*}
    \end{proposition}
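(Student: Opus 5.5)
The plan is the standard lifting argument followed by Lax--Milgram on the closed subspace
\[
V \coloneqq \{ v\in H^1(U) : \Tr v = 0 \text{ on } \Gamma_1\}.
\]

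\textbf{Step 1: lifting of the boundary data.} Because $\Gamma_1$ is relatively open with Lipschitz relative closure, and $\Gamma_1,\Gamma_2$ are disjoint pieces of $\partial U$, $\Gamma_1$ is a union of boundary components separated from $\Gamma_2$. This holds in the intended application, where $\Gamma_1=\Sigma$ and $\Gamma_2=\partial\Omega$ with $\overline{\Omega_1}\subset\Omega$. I will use the trace theorem in its right-inverse form: there is a bounded linear extension operator $\mathcal{R}: H^{1/2}(\Gamma_1)\to H^1(U)$ with $\Tr(\mathcal{R}f)=f$ on $\Gamma_1$ and
\[
\|\mathcal{R}f\|_{H^1(U)}\le C_{\mathrm{tr}}\|f\|_{H^{1/2}(\Gamma_1)}.
\]
To build it, extend $f$ into a collar neighborhood of $\Gamma_1$ using the Lipschitz charts. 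Then multiply by a smooth cutoff equal to $1$ near $\Gamma_1$ and vanishing near $\Gamma_2$, which is possible since the two sets have positive distance. I expect this step to be the main obstacle, in the sense that it is where the geometric hypotheses enter. If $\Gamma_1$ and $\Gamma_2$ could touch, one would have to work with $H^{1/2}_{00}$-type spaces. I will remark that in the present setting the sets are separated, so the standard trace theorem applies.

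\textbf{Step 2: the homogeneous problem.} Write $u_f = \mathcal{R}f + w$ with $w\in V$. The weak formulation is: find $w\in V$ such that
\[
a(w,v) = -a(\mathcal{R}f,v)\quad \text{for all } v\in V,
\qquad\text{where}\qquad
a(u,v)=\int_U(\nabla u\cdot\nabla v+\beta uv)\,\mathrm{d}\bfx .
\]
The Neumann condition on $\Gamma_2$ is natural and is encoded by testing against all $v\in V$. The form $a$ is bounded on $H^1(U)$. For coercivity on $V$, the case $\beta>0$ is immediate. When $\beta=0$ I will invoke a Friedrichs/Poincar\'e inequality $\|v\|_{L^2(U)}\le C_P\|\nabla v\|_{L^2(U)}$ for $v\in V$. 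This holds because $U$ is connected and $\Gamma_1$ has positive surface measure, and it is proved by the usual compactness (Rellich) contradiction argument. Lax--Milgram then gives a unique $w$ with
\[
\|w\|_{H^1(U)}\le C\,\|\mathcal{R}f\|_{H^1(U)}.
\]

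\textbf{Step 3: uniqueness, independence of the lifting, and the estimate.} Two weak solutions with the same data differ by an element of $V$ that is $a$-orthogonal to all of $V$. By coercivity this difference is zero, so $u_f$ does not depend on the choice of lifting. The estimate then follows from the triangle inequality and Step 1:
\[
\|u_f\|_{H^1(U)}\le (1+C)\,C_{\mathrm{tr}}\|f\|_{H^{1/2}(\Gamma_1)} \eqqcolon C_0\|f\|_{H^{1/2}(\Gamma_1)},
\]
where $C_0$ depends only on $U$, through the trace and Poincar\'e constants, and on $\beta$.
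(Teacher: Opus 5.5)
Your proposal is correct and follows the paper's proof essentially step for step: lift $f$ to some $\tilde f\in H^1(U)$ with $\|\tilde f\|_{H^1(U)}\le C\|f\|_{H^{1/2}(\Gamma_1)}$, solve for $w=u-\tilde f$ in $V=\{v\in H^1(U): v=0 \text{ on } \Gamma_1\}$ by Lax--Milgram with coercivity from the Poincar\'e inequality on $V$, and conclude by the triangle inequality. One side remark is inaccurate but harmless: the hypotheses do not force $\Gamma_1$ and $\Gamma_2$ to be separated (they may share a relative boundary, e.g.\ Dirichlet data on one face of a cube and Neumann data on the others), yet the lifting still exists when $H^{1/2}(\Gamma_1)$ is understood as the trace space of $H^1(U)$ on $\Gamma_1$ with the quotient norm, because $H^{1/2}_{00}$-type spaces are needed only for extension by zero, not for lifting Dirichlet data.
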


\begin{proof}
Since $f \in H^{\half}(\Gamma_1)$, there exists a function $\tilde{f} \in H^1(U)$ such that
\begin{align*}
	\restr{\tilde{f}}{\Gamma_1} = f, 
	\quad \text{and} \quad
	\|\tilde{f}\|_{H^1(U)} \leq C_1 \|f\|_{H^{\half}(\Gamma_1)},
\end{align*}
for some constant $C_1 > 0$ depending only on the domain $U$. 
Define the subspace
\begin{align*}
	V \coloneqq \left\{ v \in H^1(U) \ \middle| \ v = 0 \text{ on } \Gamma_1 \right\}.
    \end{align*}
Then $u \in H^1(U)$ is a weak solution to Equation \eqref{helmholtz-eqn} if $u - \tilde{f} \in V$ and
\begin{align*}
	\int_U \nabla u \cdot \nabla v \, \mathrm{d}\bfx + \beta \int_U uv \, \mathrm{d}\bfx = 0,
	\quad \text{for all } v \in V.
\end{align*}
Let $w \coloneqq u - \tilde{f} \in V$, then the above weak formulation becomes
\begin{align*}
	\int_U \nabla w \cdot \nabla v \, \mathrm{d}\bfx + \beta \int_U wv \, \mathrm{d}\bfx
	= -\int_U \nabla \tilde{f} \cdot \nabla v \, \mathrm{d}\bfx - \beta \int_U \tilde{f} v \, \mathrm{d}\bfx,
	\quad \text{for all } v \in V.
\end{align*}
Let $L : V \to \R$ be the functional defined by
\begin{align*}
	L(v) = - \int_U \nabla \tilde{f} \cdot \nabla v \, \mathrm{d}\bfx - \beta \int_U \tilde{f} v \, \mathrm{d}\bfx,
\quad \text{for } v \in V,
\end{align*}
then $L$ is a bounded linear functional on $V$, with $\|L\| \leq \max\{1, \beta\} \|\tilde{f}\|_{H^{1}(U)}$. Define a bilinear form $A : V \times V \to \R$ by
\begin{align*}
	A(v_1, v_2)
	= \int_U \nabla v_1 \cdot \nabla v_2 \, \mathrm{d}\bfx + \beta \int_U v_1 v_2 \, \mathrm{d}\bfx,
	\quad \text{for } v_1, v_2 \in V,
\end{align*}
then $A$ is bounded. For any $v\in V$, since $v = 0$ on $\Gamma_1$, which is a positive-measure portion of $\partial U$, applying Poincar\'e's inequality, we get
    \begin{align*}
\|v\|_{H^1(U)}^2 \leq C_2 \|\nabla v\|_{L^2(U)}^2, 
    \end{align*}
for some constant $C_2 > 0$ depending only on the domain $U$.  Hence,
\begin{align*}
	A(v,v) = \int_U |\nabla v|^2 \, \mathrm{d}\bfx + \beta \int_U v^2 \, \mathrm{d}\bfx
	\geq \frac{1}{C_2} \|v\|_{H^1(U)}^2.
\end{align*}
So $A$ is coercive. 
By the Lax--Milgram Theorem, there exists a unique function $w_f \in V$ such that
\begin{align*}
	A(w_f,v) = L(v), \quad \text{for all } v \in V.
\end{align*}
Moreover, we obtain the estimate
    \begin{align*}
\|w_f\|_{H^1(U)} \leq \max\{1,\beta\} C_{2} \|\tilde{f}\|_{H^1(U)}.
    \end{align*}
Thus, the unique function $u_f \in H^1(U)$ satisfies 
$u_f - \tilde{f} = w_f \in V$ and
\begin{align*}
	\int_U \nabla u_f \cdot \nabla v \, \mathrm{d}\bfx  + \beta \int_U u_f v \, \mathrm{d}\bfx = 0,
	\quad \text{for all } v \in V.
\end{align*}
Furthermore,
    \begin{align*}
\|u_f\|_{H^1(U)}
	\leq (\max\{1,\beta\} C_{2} + 1) \|\tilde{f}\|_{H^1(U)}
	\leq (\max\{1,\beta\} C_{2} + 1) C_1 \|f\|_{H^{\half}(\Gamma_1)}.
    \end{align*}
Define $C_0 \coloneqq (\max\{1,\beta\} C_{2} + 1) C_1$. Clearly, $C_0$ depends only on the domain $U$ and $\beta$, and
    \begin{align*}
\|u_f\|_{H^1(U)} \leq C_0\|f\|_{H^{\half}(\Gamma_1)},
    \end{align*}
as we desired.
    \end{proof}

    \section{The Space $\cH$}
    \label{sec:cond-A}
\subsection{Condition $\cA$ and the definition of $\cH$}
Let $\Omega_1$ be a bounded open subset of $\R^n$ {with Lipschitz boundary,} 
and let $\Omega$ be a larger cuboidal domain such that $\overline{\Omega_1} \subset \Omega$ and $\partial\Omega_1 \cap \partial\Omega = \varnothing$. 
Define $\Omega_2 \coloneqq \Omega \setminus \overline{\Omega_1}$. 
Let $\boldsymbol{n}_1$ denote the outward-pointing unit normal vector on $\Sigma = \partial\Omega_1$, and $\boldsymbol{n}_2$ denote the outward-pointing unit normal vector on $\partial\Omega$.  See Figure~\ref{fig:domain}. 
    \begin{definition}
For each $v \in H^1(\Omega)$, we say that $v$ satisfies \textbf{Condition $\cA$} if and only if
	\begin{align*}
-\Delta v^{(2)} + \beta v^{(2)} &= 0, \quad \text{in } \Omega_2, 
    \\
{\boldsymbol{n}_2 \cdot  \nabla v^{(2)}}  &= 0, \quad  \text{on } \partial\Omega,
	\end{align*}
where $v^{(2)} = \restr{v}{\Omega_2}$.  Define the set
    \begin{align*}
\cH \coloneqq \{ v \in H^1(\Omega) \mid v \text{ satisfies Condition $\cA$} \}.
    \end{align*}
    \end{definition}
From the definition and  Proposition~\ref{prop:Helmholtz}, we obtain the following corollary. 
    \begin{corollary}\label{h(Omega_1)=scriptH}
If $v^{(1)} \in H^1(\Omega_1)$, there exists a unique function  $v\in \cH $ such that $v|_{\Omega_1}  = v^{(1)}$.  Conversely, if $v\in \cH$, then $v|_{\Omega_1} \in H^1(\Omega_1)$.
    \end{corollary}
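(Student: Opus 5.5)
The forward direction is an extension construction built on Proposition~\ref{prop:Helmholtz}, with $U=\Omega_2$, $\Gamma_1=\Sigma$, $\Gamma_2=\partial\Omega$. This choice meets the hypotheses of that proposition: $\Omega_2$ is a bounded open set whose boundary is the disjoint union of $\Sigma$ and $\partial\Omega$, each of positive surface measure, and $\Sigma$ is Lipschitz. Connectedness of $\Omega_2$ is implicit in the annular setting; if it failed, one would apply the proposition componentwise, since every component of $\Omega_2$ touches $\Sigma$.

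\textbf{Existence.} Take $v^{(1)}\in H^1(\Omega_1)$ and let $f:=\Tr_\Sigma v^{(1)}\in H^{\half}(\Sigma)$. Proposition~\ref{prop:Helmholtz} gives a unique weak solution $u_f\in H^1(\Omega_2)$ of $-\Delta u+\beta u=0$ in $\Omega_2$, $u=f$ on $\Sigma$, $\boldsymbol{n}_2\cdot\nabla u=0$ on $\partial\Omega$. Define $v:=v^{(1)}$ in $\Omega_1$ and $v:=u_f$ in $\Omega_2$. The two pieces are $H^1$ on either side of the Lipschitz interface $\Sigma$ and have equal traces there, so the standard gluing lemma gives $v\in H^1(\Omega)$. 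To prove the gluing lemma, integrate by parts on each side against $\varphi\in C_c^\infty(\Omega)$; the boundary terms on $\Sigma$ cancel, so the distributional gradient of $v$ is the piecewise gradient. By construction $v|_{\Omega_2}=u_f$ satisfies Condition~$\cA$, where Condition~$\cA$ is read in the weak sense: the weak formulation holds against all $w\in H^1(\Omega_2)$ with zero trace on $\Sigma$. Hence $v\in\cH$ and $v|_{\Omega_1}=v^{(1)}$.

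\textbf{Uniqueness.} Suppose $v,\tilde v\in\cH$ both restrict to $v^{(1)}$ on $\Omega_1$. Their traces on $\Sigma$ from the $\Omega_1$ side agree. Because $v\in H^1(\Omega)$, the trace on $\Sigma$ taken from $\Omega_2$ equals the trace taken from $\Omega_1$; this is the one point needing care, and it is again the interface trace identity for $H^1$ functions across a Lipschitz hypersurface. So $v|_{\Omega_2}$ and $\tilde v|_{\Omega_2}$ are both weak solutions of \eqref{helmholtz-eqn} with the same datum $f$, and the uniqueness part of Proposition~\ref{prop:Helmholtz} gives $v=\tilde v$ on $\Omega_2$. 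Since $\Sigma$ has Lebesgue measure zero, $v=\tilde v$ a.e.\ in $\Omega$.

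\textbf{Converse.} This is immediate: restricting an $H^1(\Omega)$ function to the open subset $\Omega_1$ gives an $H^1(\Omega_1)$ function.

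The main obstacle is not analytic but definitional. One must fix the weak meaning of Condition~$\cA$ so that it coincides with the weak formulation used in Proposition~\ref{prop:Helmholtz}, and one must justify the trace matching across $\Sigma$ in both directions.
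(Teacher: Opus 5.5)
Your proof is correct and follows the paper's argument. You build the extension via Proposition~\ref{prop:Helmholtz} on $\Omega_2$ with Dirichlet data on $\Sigma$, get uniqueness from the uniqueness part of that proposition, and note that the converse is trivial. Beyond that, you make explicit the gluing lemma, the two-sided trace identity across $\Sigma$, and the weak reading of Condition~$\cA$, all of which the paper's short proof uses implicitly.
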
 
    \begin{proof} 
Suppose that $v^{(1)} \in H^1(\Omega_1)$. Invoking Proposition~\ref{prop:Helmholtz}, let $v^{(2)} \in H^1(\Omega_2)$  be the unique solution to  
    \begin{equation}
    \begin{aligned}\label{eq:v2-from-v1}
-\Delta v^{(2)} + \beta v^{(2)} &= 0, \quad \text{in } \Omega_2, 
    \\
v^{(2)} &= v^{(1)}, \quad  \text{on } \Sigma = \partial\Omega_1, 
    \\
\boldsymbol{n}_2 \cdot 	\nabla v^{(2)}  &= 0, \quad  \text{on } \partial\Omega,
    \end{aligned}
    \end{equation}
Then $v$, defined by
    \begin{equation}
v(\bfx) = 
    \begin{cases}
v^{(1)}(\bfx),  &\text{if } \bfx \in  \Omega_1,  
    \\
v^{(2)}(\bfx),  &\text{if } \bfx \in  \Omega_2,
	\end{cases}
    \end{equation}
{belongs to $\cH$.} Uniqueness follows since any $\tilde v\in \mathcal{H}$ with $\tilde v = v^{(1)}$ in $\Omega_1$ must clearly satisfy \eqref{eq:v2-from-v1} in $\Omega_2$. The converse is trivial.
    \end{proof}

    \begin{proposition}
Define the bilinear form  $(\,\cdot\, , \,\cdot\,)_{\cH}$  via  
	\begin{align*}
(v,w)_{\cH} \coloneqq \left(v^{(1)},w^{(1)}\right)_{H^1(\Omega_1)}, \quad  \forall \, v,w \in \cH,
	\end{align*}
where $v^{(1)} \coloneqq \restr{v}{\Omega_1}$ and $w^{(1)} \coloneqq \restr{w}{\Omega_1}$. Then $\left(\cH, (\,\cdot\, , \,\cdot\,)_{\cH} \right)$ is a Hilbert space. The associated norm is defined as usual, by $\|v\|_{\cH} \coloneqq \sqrt{(v,v)_{\cH}}$, for all $v\in\mathcal{H}$.
    \end{proposition}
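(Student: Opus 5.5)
The plan is to show that the restriction map $R:\cH\to H^1(\Omega_1)$, $Rv = v|_{\Omega_1}$, is a linear bijection. Once that is known, $(\,\cdot\,,\,\cdot\,)_{\cH}$ is the pullback of the $H^1(\Omega_1)$ inner product under $R$, and completeness of $\cH$ is inherited from completeness of $H^1(\Omega_1)$.

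\textbf{Step 1: $\cH$ is a linear subspace.} First, $\cH$ is a linear subspace of $H^1(\Omega)$. Condition $\cA$ is understood in the weak sense: $\int_{\Omega_2} (\nabla v\cdot\nabla \phi + \beta v\phi)\,\mathrm{d}\bfx = 0$ for all $\phi\in H^1(\Omega_2)$ with $\phi = 0$ on $\Sigma$, which is the formulation used in Proposition~\ref{prop:Helmholtz}. This condition is linear in $v$, so it is preserved under linear combinations. Consequently $(\,\cdot\,,\,\cdot\,)_{\cH}$ is a well-defined, symmetric, bilinear form on $\cH$ with $(v,v)_{\cH}\ge 0$.

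\textbf{Step 2: definiteness.} If $(v,v)_{\cH}=0$, then $v^{(1)}=0$ in $H^1(\Omega_1)$. By the uniqueness part of Corollary~\ref{h(Omega_1)=scriptH}, the only element of $\cH$ with zero restriction to $\Omega_1$ is the one built from the zero Helmholtz extension. Concretely, $v^{(2)}$ solves \eqref{helmholtz-eqn} with $f = 0$, so Proposition~\ref{prop:Helmholtz} gives $v^{(2)}=0$. Hence $v=0$ and the form is an inner product.

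\textbf{Step 3: completeness.} Let $(v_k)$ be Cauchy in $\cH$. Then $(v_k^{(1)})$ is Cauchy in $H^1(\Omega_1)$ and converges to some $v^{(1)}$. By Corollary~\ref{h(Omega_1)=scriptH}, let $v\in\cH$ be the unique element with $v|_{\Omega_1}=v^{(1)}$. Then $\|v_k - v\|_{\cH} = \|v_k^{(1)}-v^{(1)}\|_{H^1(\Omega_1)}\to 0$, so $\cH$ is complete.

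\textbf{Main obstacle: the gluing step in Corollary~\ref{h(Omega_1)=scriptH}.} The one point needing care is that the glued function $v$ actually lies in $H^1(\Omega)$. This holds because $v^{(1)}$ and $v^{(2)}$ have equal traces on $\Sigma$, which is the Lipschitz interface, and the standard gluing lemma for $H^1$ functions with matching traces then applies; I take this as given from the Corollary.

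\textbf{Equivalence with the $H^1(\Omega)$ norm.} As a complement, I would record that $\|v\|_{\cH}\le\|v\|_{H^1(\Omega)}\le C\|v\|_{\cH}$. The upper bound follows from $\|v^{(2)}\|_{H^1(\Omega_2)}\le C_0\|v^{(1)}\|_{H^{1/2}(\Sigma)}$ (Proposition~\ref{prop:Helmholtz}) combined with the trace theorem on $\Omega_1$. This shows $\cH$ is a closed subspace of $H^1(\Omega)$ with an equivalent norm, giving a second route to completeness.
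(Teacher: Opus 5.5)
Your proposal is correct and follows the paper's proof essentially step for step. Definiteness comes from the vanishing restriction forcing the zero Helmholtz extension, and completeness from passing the Cauchy sequence to $H^1(\Omega_1)$ and re-extending via Corollary~\ref{h(Omega_1)=scriptH}. Your closing norm-equivalence remark is a valid extra (it is exactly the boundedness of $\cL$ the paper uses later). Note, however, that closedness of $\cH$ in $H^1(\Omega)$ does not follow from the norm equivalence itself. It follows from Condition $\cA$ being a family of $H^1(\Omega)$-continuous linear constraints.
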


    \begin{proof}
That $\cH$ is a linear space is easy to verify. All properties of the inner product are trivial except for the positivity property. For that, we only need to show that $\|v\|_{\cH} = 0$ implies $v \equiv 0$ in $\Omega$.

Assume that $\|v\|_{\cH} = 0$.  Then $\|v^{(1)}\|_{H^1(\Omega_1)} = \|v\|_{\cH} = 0$, which implies that $v^{(1)} \equiv 0$ in $\Omega_1$. Let $v^{(2)} \coloneqq \restr{v}{\Omega_2}$. Since $v \in H^1(\Omega)$, then $v^{(2)} = v^{(1)} = 0$  on $\Sigma = \partial\Omega_1$. 
Combining with Condition $\cA$, $v^{(2)}$ satisfies \eqref{eq:v2-from-v1} with $v^{(1)} = 0$. 
The only solution to the above problem is $v^{(2)} \equiv 0$ in $\Omega_2$. 
Thus $v \equiv 0$ in $\Omega$.

Next, we show that $(\cH,\|\cdot\|_{\cH})$ is complete. 
Let $\{v_k\}_{k=1}^\infty$ be a Cauchy sequence in $\cH$.
We need to show that there exists a function $v \in \cH$ such that $\|v_k - v \|_{\cH} \to 0$ as $k \to \infty$.

For any $\varepsilon > 0$, there exists $K \in\mathbb{N}$ such that $\|v_m - v_k\|_{\cH} < \varepsilon$, for all $m,k > K$.  Let $v_\ell^{(1)} \coloneqq \restr{v_\ell}{\Omega_1}$, for any $\ell\in\mathbb{N}$.  By the definition of $\|\cdot\|_{\cH}$, we have
\begin{align*}	
	\left\|v_m^{(1)} - v_k^{(1)}\right\|_{H^1(\Omega_1)}
	= \|v_m - v_k\|_{\cH}
	< \varepsilon,
	\quad \text{for all } m,k > K.
\end{align*}
Hence $\{v_k^{(1)}\}$ is a Cauchy sequence in $H^1(\Omega_1)$. Since $H^1(\Omega_1)$ is complete, there exists a function $v^{(1)} \in H^1(\Omega_1)$ such that $v_k^{(1)} \to v^{(1)}$ strongly in $H^1(\Omega_1)$, as $k\to\infty$.
		
Now, let $v^{(2)} \in H^1(\Omega_2)$ be the unique solution to the problem \eqref{eq:v2-from-v1} with $v^{(2)} = v^{(1)}$, on $ \Sigma = \partial\Omega_1. $
Define
\begin{align*}
	v(\bfx) \coloneqq	\begin{cases}
			v^{(1)}(\bfx), & \text{if } \bfx \in  \Omega_1,\\
			v^{(2)}(\bfx), & \text{if } \bfx \in  \Omega_2.
		\end{cases}
\end{align*}
Then, $v \in \cH$ and
	$\|v_k - v\|_{\cH}
	= \left\|v_k^{(1)} - v^{(1)}\right\|_{H^1(\Omega_1)} \to 0$,
	as $k\to\infty$. 
That completes the proof. 
\end{proof}
\subsection{The restriction and the extension operators}

In this subsection, we study two operators: the restriction on $\Omega_1$ and the extension to $\Omega$.

Let $B: \cH \to H^1(\Omega_1)$ be the restriction operator given by $B(v) = \restr{v}{\Omega_1}$, for $v \in \cH$.  Then from the definition of the space $\cH$, $B$ is an isometric isomorphism,  that is, $B$ is an isomorphism  and $\|B(v)\|_{H^1(\Omega_1)} = \|v\|_{\cH}$, for any $v \in \cH$. Let $\cH'$ be the dual space of $\cH$. Define the adjoint operator $B^*: [H^1(\Omega_1)]' \to \cH'$ by
    \begin{align*}
B^*[f_1](v) = f_1\left(\restr{v}{\Omega_1}\right), \quad \text{for any } f_1 \in [H^1(\Omega_1)]' \text{ and any } v\in \cH.
    \end{align*}
It then follows from \cite[Theorem 4.13.4]{Friedman1982} that $B^*$ is an isometric isomorphism as well. The above relations show that Problem \hyperlink{po}{$(P_0)$} can equivalently be phrased in terms of bilinear forms over $\mathcal{H}$, though we adopt a more energetic perspective in what follows.

Next, we introduce the extension operator $\cL : H^1(\Omega_1) \to H^1(\Omega_2)$ by $\cL(v^{(1)}) = v^{(2)}$, for any $v^{(1)} \in H^1(\Omega_1)$,  where $v^{(2)}$ is the unique solution to the problem \eqref{eq:v2-from-v1}. 

    \begin{proposition}\label{prop:L-bnd}
$\cL$ is a bounded linear operator.
    \end{proposition}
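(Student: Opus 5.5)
The plan is to check linearity and then bound $\cL$ by chaining the trace theorem on $\Omega_1$ with the a priori estimate of Proposition~\ref{prop:Helmholtz}, applied with $U = \Omega_2$, $\Gamma_1 = \Sigma$, and $\Gamma_2 = \partial\Omega$.

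\emph{Linearity.} Let $v^{(1)}, w^{(1)} \in H^1(\Omega_1)$ and $a,b\in\R$. The function $a\cL(v^{(1)}) + b\cL(w^{(1)})$ lies in $H^1(\Omega_2)$ and satisfies the weak form of the homogeneous Helmholtz equation with zero Neumann data on $\partial\Omega$. This holds because the weak formulation, tested against the space $V$ of functions vanishing on $\Sigma$, is linear. Since the trace map is linear, its trace on $\Sigma$ equals the trace of $av^{(1)} + bw^{(1)}$. By the uniqueness part of Proposition~\ref{prop:Helmholtz}, it therefore coincides with $\cL(av^{(1)} + bw^{(1)})$.

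\emph{Boundedness.} Let $v^{(1)}\in H^1(\Omega_1)$. Since $\Omega_1$ is Lipschitz, the trace theorem gives $f := \Tr(v^{(1)}) \in H^{\half}(\Sigma)$ with
\[
\|f\|_{H^{\half}(\Sigma)} \le C_T \|v^{(1)}\|_{H^1(\Omega_1)},
\]
where $C_T$ depends only on $\Omega_1$. Now $\cL(v^{(1)}) = u_f$, so Proposition~\ref{prop:Helmholtz} yields
\[
\|\cL(v^{(1)})\|_{H^1(\Omega_2)} \le C_0\|f\|_{H^{\half}(\Sigma)} \le C_0 C_T \|v^{(1)}\|_{H^1(\Omega_1)}.
\]
Hence $\cL$ is bounded with operator norm at most $C_0 C_T$.

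\emph{Main point to verify.} The only step needing care is that the geometric hypotheses of Proposition~\ref{prop:Helmholtz} hold in this setting. Here $\Omega_2$ is connected (as pictured), and its boundary splits into the disjoint pieces $\Sigma$ and $\partial\Omega$, each of positive measure, with $\Sigma$ Lipschitz. The second point to confirm is that the $H^{\half}(\Sigma)$ norm appearing in the trace theorem from the $\Omega_1$ side is the same norm used in Proposition~\ref{prop:Helmholtz}, where it is taken from the $\Omega_2$ side. Both are intrinsic norms on the Lipschitz surface $\Sigma$ and are equivalent, so at worst this changes the constant.
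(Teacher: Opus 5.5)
Your proposal is correct and follows the paper's proof: linearity comes from uniqueness for the mixed Helmholtz problem, and boundedness comes from chaining the estimate of Proposition~\ref{prop:Helmholtz} with control of the trace norm by $\|v^{(1)}\|_{H^1(\Omega_1)}$. The paper uses the infimum characterization \eqref{H-half-norm} to get that control, which gives constant $1$ in place of your $C_T$; your remark that the $H^{\half}(\Sigma)$ norms taken from the two sides are equivalent makes explicit a point the paper leaves implicit.
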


    \begin{proof}
For the linearity of $\cL$, take any $v^{(1)}, w^{(1)} \in H^1(\Omega_1)$ and any $a,b \in \R$. 
	By the definition of $\cL$ and the linearity of the operator $v \mapsto -\Delta v + \beta v$, 
    $a \cL(v^{(1)}) + b \cL(w^{(1)})$ is a solution to the problem
	\begin{align*}
-\Delta v + \beta v &= 0, \quad \text{in } \Omega_2, 
    \\
v &= a v^{(1)} + b w^{(1)}, \quad \text{on } \Sigma = \partial\Omega_1, 
    \\
\boldsymbol{n}_2 \cdot 	\nabla v  &= 0, \quad \text{on } \partial\Omega.
    \end{align*}
Since the above problem has a unique solution, which is defined by $\cL(a v^{(1)} + b w^{(1)})$, 
	then $a \cL(v^{(1)}) + b \cL(w^{(1)}) = \cL(a v^{(1)} + b w^{(1)})$.
	
For the boundedness of $\cL$, since
	\begin{align}
    \label{H-half-norm}
\|v\|_{H^{\half}(\Sigma)} = \inf \left\{ \|\tilde{v}\|_{H^1(\Omega_1)} \ \middle| \ \tilde{v}\in H^1(\Omega_1), \ \Tr(\tilde{v}) = v \text{ on } \Sigma = \partial\Omega_1 \right\},		
	\end{align}
the bound on the solution from Proposition~\ref{prop:Helmholtz} implies that $\cL$ is bounded.
    \end{proof}

    \section{Energy Setting for the Problem}
    \label{sec:energy}

We will demonstrate the variational convergence of the parametrized energy functionals $\cE_\alpha$ defined in \eqref{E_alpha-energy},  as $\alpha\searrow 0$. However, we need to carefully define the limiting energy and its set of admissible functions. We have already seen, and, in fact, it is quite obvious, that the solution $u_\alpha$ of Problem ~\eqref{eqn:alpha-prob} is connected to a minimization problem. Observe an important fact that we state without proof.

    \begin{proposition}\label{prop:min-Ealpha}
The energy functional $\cE_\alpha$ has a unique minimizer $u_\alpha \in H^1(\Omega)$ and it satisfies Problem~\eqref{eqn:alpha-prob}. Moreover, $u_\alpha \in \cH$ and  
    \begin{align*}
\min_{u\in H^1(\Omega)} \cE_\alpha[u]	=  \min_{u\in\cH} \cE_\alpha[u].
    \end{align*}
    \end{proposition}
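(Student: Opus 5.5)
Existence and uniqueness of the minimizer come from the direct method, i.e.\ the Riesz representation / Lax--Milgram theorem on $H^1(\Omega)$. Write $\cE_\alpha[u] = \tfrac12 a_\alpha(u,u) - \ell(u)$, where
\[
a_\alpha(u,w) = \int_\Omega (D_\alpha \nabla u\cdot\nabla w + c_\alpha u w)\,\mathrm{d}\bfx + \int_\Sigma \kappa u w \,\mathrm{d}S
\]
and $\ell(u) = \langle q_0,u\rangle_\Omega - \langle g,u\rangle_\Sigma$.

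The form $\ell$ is bounded on $H^1(\Omega)$. For the $q$ term, $q\in[H^1(\Omega_1)]'$ acts on $u|_{\Omega_1}$, and restriction is bounded. For the $g$ term, use the trace theorem on $\Sigma$.

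The form $a_\alpha$ is bounded and symmetric, again by the trace theorem for the $\kappa$ term. It is also coercive for fixed $\alpha\in(0,1)$:
\[
a_\alpha(u,u) \ge \min\{1,\gamma,\alpha,\alpha\beta\}\,\|u\|^2_{H^1(\Omega)}
\]
when $\beta>0$. If $\beta=0$, coercivity needs a Poincaré-type argument: bound $\int_{\Omega_2} u^2$ by $C\big(\|\nabla u\|^2_{L^2(\Omega)} + \|u\|^2_{L^2(\Omega_1)}\big)$. This follows by a standard compactness/contradiction argument using connectedness of $\Omega$; alternatively, use the Poincaré inequality on $\Omega_2$ with trace term on $\Sigma$ together with the trace bound from $\Omega_1$. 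Hence $a_\alpha$ is an inner product equivalent to the $H^1$ one, so $\cE_\alpha$ is strictly convex and coercive and has a unique minimizer $u_\alpha$, characterized by the Euler--Lagrange equation $a_\alpha(u_\alpha,w)=\ell(w)$ for all $w\in H^1(\Omega)$.

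Next I show that this weak equation encodes $(P_\alpha)$. Test first with $w\in C_c^\infty(\Omega_1)$ to get the first equation in $\Omega_1$. Then test with $w\in C_c^\infty(\Omega_2)$: since $q_0$ vanishes there and the $\Sigma$ terms drop, this gives $\alpha(-\Delta u_\alpha+\beta u_\alpha)=0$ in $\Omega_2$ in $\mathcal D'$. Continuity across $\Sigma$ is automatic from $u_\alpha\in H^1(\Omega)$. The flux conditions come from general $w$ via the generalized Green formula, which defines normal derivatives in $H^{-1/2}$ for $H^1$ functions with $L^2$ (or dual) Laplacian. Testing with $w$ supported near $\partial\Omega$ gives $\boldsymbol n_2\cdot\nabla u_\alpha=0$ there. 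Testing with $w$ supported near $\Sigma$ gives the transmission condition $-\boldsymbol n_1\cdot\nabla(u_\alpha^{(1)}-\alpha u_\alpha^{(2)}) = \kappa u_\alpha+g$, with the sign conventions of $(P_\alpha)$. I expect this to be the most delicate step, though still routine: it requires careful use of the weak normal traces, since $q$ is only in the dual space and the normal derivatives exist only in $H^{-1/2}(\Sigma)$.

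Finally, the membership $u_\alpha\in\cH$ is exactly what the previous step produced: $u_\alpha^{(2)}$ satisfies $-\Delta u_\alpha^{(2)}+\beta u_\alpha^{(2)}=0$ in $\Omega_2$ (after dividing by $\alpha>0$) and $\boldsymbol n_2\cdot\nabla u_\alpha^{(2)}=0$ on $\partial\Omega$. That is Condition $\cA$, so $u_\alpha\in\cH$. Since $\cH\subset H^1(\Omega)$, we have $\inf_{\cH}\cE_\alpha \ge \min_{H^1(\Omega)}\cE_\alpha = \cE_\alpha[u_\alpha]$. The reverse inequality holds because $u_\alpha\in\cH$, so the infimum over $\cH$ is attained at $u_\alpha$ and the two minima coincide.
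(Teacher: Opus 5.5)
Your argument is correct. The paper states this proposition explicitly without proof, alluding in Section~2 only to coercivity, strict convexity, and the weak form \eqref{weak-form} as the Euler--Lagrange equation; your proposal supplies exactly that standard reasoning. It also covers the case $\beta=0$, which the paper's assumption $\beta\ge 0$ allows and where coercivity genuinely needs the Poincar\'e-type bound you give.

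One small simplification: $u_\alpha\in\cH$ follows directly by testing the Euler--Lagrange equation with $w\in H^1(\Omega)$ vanishing on $\Omega_1$ (zero extensions of $H^1(\Omega_2)$ functions with zero trace on $\Sigma$). This yields the weak form of Condition~$\cA$ without any normal-trace theory.
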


Now, we have enough machinery to identify the minimization problem associated to $u_0$, the solution of Problem~\hyperlink{po}{$(P_0)$}.  Define the energy functional $\cE_0 : \cH  \to \R$ by
\begin{align*}
	\cE_0[u] =
	\int_{\Omega_1}
	\frac{1}{2} \left( |\nabla u|^2 + \gamma u^2 \right)\mathrm{d}\bfx 	+
	\int_{\Sigma}\frac{1}{2} \kappa u^2  \mathrm{d}S- \langle q, u\rangle_{\Omega_1}+ \langle g, u\rangle_{\Sigma},
	\quad u \in \cH, 
\end{align*}
Note carefully that the volume integral is over $\Omega_1$ only.

    \begin{proposition}\label{prop:min-E0}
The energy functional $\cE_0$ has a unique minimizer $u$ in $\cH$, and, moreover, the minimizer solves Problem~\hyperlink{po}{$(P_0)$}.  Conversely, if $u$ is a solution to Problem~\hyperlink{po}{$(P_0)$}, then $u$ is a minimizer of $\cE_0$ in $\cH$.
    \end{proposition}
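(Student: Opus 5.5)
The plan is to transfer the minimization problem from $\cH$ to $H^1(\Omega_1)$ through the restriction operator $B$. Since $B:\cH\to H^1(\Omega_1)$ is an isometric isomorphism, and $\cE_0[u]$ depends only on $u^{(1)}=Bu$ (the volume integral is over $\Omega_1$, the trace on $\Sigma$ is the trace of $u^{(1)}$, and $\langle q,u\rangle_{\Omega_1}$ only sees $u^{(1)}$), we can write $\cE_0 = J\circ B$ with
\[
J[w] = \int_{\Omega_1}\tfrac12\left(|\nabla w|^2+\gamma w^2\right)\mathrm{d}\bfx + \int_\Sigma \tfrac12\kappa w^2\,\mathrm{d}S - \langle q,w\rangle_{\Omega_1} + \langle g,w\rangle_{\Sigma},\qquad w\in H^1(\Omega_1).
\]
Minimizing $\cE_0$ over $\cH$ is therefore equivalent to minimizing $J$ over $H^1(\Omega_1)$, with minimizers related by $u = B^{-1}w$, i.e.\ $u^{(2)}=\cL(w)$.

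Next, existence and uniqueness for $J$. The symmetric bilinear form $a(w,v)=\int_{\Omega_1}(\nabla w\cdot\nabla v+\gamma wv)\,\mathrm{d}\bfx+\int_\Sigma\kappa wv\,\mathrm{d}S$ is bounded by the trace theorem on the Lipschitz domain $\Omega_1$. It is coercive, with $a(w,w)\ge\min\{1,\gamma\}\|w\|_{H^1(\Omega_1)}^2$, because $\gamma>0$ and $\kappa\ge0$. The linear part $w\mapsto -\langle q,w\rangle_{\Omega_1}+\langle g,w\rangle_\Sigma$ is bounded since $q\in[H^1(\Omega_1)]'$ and $g\in H^{-\half}(\Sigma)$, again using the trace theorem. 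By the Lax--Milgram theorem (equivalently, strict convexity, coercivity and weak lower semicontinuity of $J$), $J$ has a unique minimizer $w^\ast$. It is characterized by the Euler--Lagrange equation
\[
a(w^\ast,v)=\langle q,v\rangle_{\Omega_1}-\langle g,v\rangle_\Sigma\quad\forall v\in H^1(\Omega_1).
\]
This is exactly the weak form of the Robin problem \eqref{bvp-OG}. Thus $u:=B^{-1}w^\ast$ is the unique minimizer of $\cE_0$ on $\cH$. Its restriction $u^{(1)}=w^\ast$ solves \eqref{bvp-OG}, and $u^{(2)}=\cL(w^\ast)$ solves \eqref{bvp-OG-ext} by the definition of $\cL$ and Corollary~\ref{h(Omega_1)=scriptH}. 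Hence $u$ solves Problem~\hyperlink{po}{$(P_0)$}.

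For the converse, let $u$ solve Problem~\hyperlink{po}{$(P_0)$} in the weak sense. Then $u\in H^1(\Omega)$, and \eqref{bvp-OG-ext} is precisely Condition $\cA$, so $u\in\cH$. Moreover $u^{(1)}$ satisfies the weak Robin formulation above. Since $a$ is symmetric and positive, the standard identity
\[
J[u^{(1)}+v]=J[u^{(1)}]+\tfrac12 a(v,v)\ge J[u^{(1)}]
\]
shows that $u^{(1)}$ minimizes $J$, and therefore $u$ minimizes $\cE_0$ on $\cH$. Equivalently, one can invoke uniqueness of weak solutions of \eqref{bvp-OG} to conclude $u^{(1)}=w^\ast$.

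I do not expect a real obstacle. The only delicate point is making precise that the notion of solution of Problem~\hyperlink{po}{$(P_0)$} is the weak one on each subdomain, with the Dirichlet coupling understood in the trace sense. One must also check that $\langle g,u\rangle_\Sigma$ and the $\kappa$-term depend only on $u^{(1)}$; this holds because the traces of $u$ from $\Omega_1$ and $\Omega_2$ agree for $u\in H^1(\Omega)$. With this, the factorization $\cE_0=J\circ B$ is justified.
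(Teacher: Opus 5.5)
Your proposal is correct and follows essentially the same route as the paper. Both arguments reduce to the functional $\cE_0^1$ (your $J$) on $H^1(\Omega_1)$ via the restriction/extension correspondence of Corollary~\ref{h(Omega_1)=scriptH}, obtain its unique minimizer, identify its Euler--Lagrange equation with \eqref{bvp-OG}, and extend by $\cL$. Your explicit identity $J[u^{(1)}+v]=J[u^{(1)}]+\tfrac12 a(v,v)$ for the converse is just a more concrete version of the paper's appeal to the Euler--Lagrange characterization of the minimizer of $\cE_0^1$.
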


\begin{proof}
Define the energy functional $\cE_0^1 : H^1(\Omega_1)  \to \R$ via
    \begin{align*}
\cE_0^1[w] \coloneqq \int_{\Omega_1} \frac{1}{2} \left( | \nabla w|^2 + \gamma w^2 \right)dx + \int_{\Sigma} \frac{1}{2} \kappa w^2  dS- \langle q,  w\rangle_{\Omega_1}	+\langle g, w\rangle_{\Sigma}, \ \forall \,  w \in H^1(\Omega_1).
    \end{align*}
One can verify that $\cE_0^1$ is a strictly convex and coercive functional {(see the proof of Theorem \ref{thm:compactness})}. Moreover, by continuity of the trace operator, $\cE_0^1$ is weakly lower semicontinuous in $H^{1}(\Omega_1)$. Thus, by the direct method of the calculus of variations, $\cE_0^1$ has a unique minimizer in $H^{1}(\Omega_1)$. 
	
Note also that by definition of $\cH$, $\cE_0$ is a strictly convex, coercive and weakly lower semicontinuous functional on $\cH$ (cf. Theorem \ref{thm:compactness}) and so a unique minimizer exists. Moreover, 
    \[
\min_{u \in \cH} \cE_0[u] = \min_{u\in \cH}\cE_0^1[u|_{\Omega_{1}}]  = \min_{w\in H^1(\Omega_1)}\cE_0^1[w].  
    \]
Now if $w = \operatorname{argmin}_{u\in H^1(\Omega_1)}\cE_0^1[u]$, we define $u^{(1)} \coloneqq w \in H^1(\Omega_1)$.  The unique function $u\in \cH$ corresponding to $u^{(1)}$ (by Corollary \ref{h(Omega_1)=scriptH}) minimizes $\cE_0[u]$ over $\cH$. By definition of the extension, $u$ solves Problem~{\hyperlink{po}{$(P_0)$}}, which is the Euler--Lagrange equation of the variational problem.

Conversely, suppose that $u \in H^1(\Omega)$ is a solution to Problem~{\hyperlink{po}{$(P_0)$}},
with $u^{(1)} \coloneqq \restr{u}{\Omega_1}$ 
and $u^{(2)} \coloneqq \restr{u}{\Omega_2}$. 
Then $u$ satisfies Condition $\cA$, hence $u \in \cH$. 
The boundary value problem satisfied by $u^{(1)} \in H^1(\Omega_1)$ is the Euler-Lagrange equation corresponding to 
minimizing $\cE_0^1$ over $H^1(\Omega_1)$. Thus, $u^{(1)}$ is the unique minimizer of $\cE_0^1$. 
Moreover, $\cE_0[u] = \cE_0^1[u^{(1)}]$.
For any $v \in \cH$, define $v^{(1)} \coloneqq \restr{v}{\Omega_1} \in H^1(\Omega_1)$ and $v^{(2)} \coloneqq \restr{v}{\Omega_2} \in H^1(\Omega_2)$, 
then $\cE_0^1[v^{(1)}] = \cE_0[v]$. 
Since $u^{(1)}$ is a minimizer of $\cE_0^1$ in $H^1(\Omega_1)$, then $\cE_0^1[u^{(1)}] \leq \cE_0^1[v^{(1)}]$, 
which implies that $\cE_0[u] \leq \cE_0[v]$. 
Hence, $u$ is a minimizer of $\cE_0$ over $\cH$.
\end{proof}

    \section{Gamma--Convergence of Energy Functionals}
    \label{sec:Gamma-conv}

Define the energy functionals
\begin{align*}
	\cF_\alpha[u] \coloneqq
	\begin{cases}
		\cE_\alpha[u], & u\in\cH,\\
		\infty,     & u\in L^2(\Omega)\setminus \cH,
	\end{cases}
	\quad \text{and} \quad 
	\cF_0[u] \coloneqq
	\begin{cases}
		\cE_0[u], & u\in\cH,\\
		\infty, & u\in L^2(\Omega)\setminus \cH.
	\end{cases}
\end{align*}
In this section, we will show that $\cF_\alpha$ $\Gamma$-converges to $\cF_0$ in the strong $L^2(\Omega)$ topology, as $\alpha \to 0$. This analysis requires three basic steps: (i) a compactness result, which helps to define the proper topology, (ii) a $\liminf$ result, and (iii) a $\limsup$ result which involves the identification of a recovery sequence. See~\cite{Braides2002} for the basic background.

\subsection{Compactness} \mbox{ }

    \begin{theorem}[Compactness]
    \label{thm:compactness}
Let $\{ \alpha_k \} \subset (0,1)$ be a sequence of numbers such that $\alpha_k \searrow 0$ as $k \to \infty$. 
	Let $\{ u_k \} \subset L^2(\Omega)$ be a sequence of functions such that, for any $k = 1,2,\ldots$, 
	$\cF_{\alpha_k}[u_k] < M < \infty$, for some $M > 0$ independent of $k$. 
	Then, there exists a subsequence $\{ u_{k_j} \} \subseteq \{ u_k \}$ and a function $u \in \cH$ such that
    \begin{align*}
u_{k_j} &\to u \text{ strongly in $L^2(\Omega)$ and in $L^2(\Sigma)$},
    \\
u_{k_j} &\rightharpoonup u \text{ weakly in $H^1(\Omega)$ and in $H^{\half}(\Sigma)$}, 
    \\
u_{k_j} &\to u \text{ a.e. in } \Omega,
	\end{align*}
	as $j \to \infty$.	
\end{theorem}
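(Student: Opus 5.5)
Since $\cF_{\alpha_k}[u_k]<M<\infty$, every $u_k$ lies in $\cH$ and $\cF_{\alpha_k}[u_k]=\cE_{\alpha_k}[u_k]$. The plan is to extract a uniform bound on $\|u_k\|_{\cH}=\|u_k|_{\Omega_1}\|_{H^1(\Omega_1)}$ from the energy bound. Because the $\alpha$-weighted terms degenerate, this cannot be done in $\Omega_2$ directly. Instead, I will transfer the bound from $\Omega_1$ to all of $\Omega$ using Condition $\cA$ and the boundedness of the extension operator $\cL$ (Proposition~\ref{prop:L-bnd}).

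\textbf{Step 1: coercivity on $\Omega_1$.} Drop the nonnegative terms $\tfrac{\alpha_k}{2}\int_{\Omega_2}(|\nabla u_k|^2+\beta u_k^2)$ and $\tfrac12\int_\Sigma\kappa u_k^2$; this uses $\beta,\kappa\ge 0$. Then
\[
M>\cE_{\alpha_k}[u_k]\ \ge\ \tfrac12\min\{1,\gamma\}\,\|u_k^{(1)}\|_{H^1(\Omega_1)}^2-\|q\|_{[H^1(\Omega_1)]'}\|u_k^{(1)}\|_{H^1(\Omega_1)}-\|g\|_{H^{-\half}(\Sigma)}\|\Tr u_k^{(1)}\|_{H^{\half}(\Sigma)}.
\]
Here $\langle q_0,u\rangle_\Omega=\langle q,u\rangle_{\Omega_1}$ by definition of $q_0=q\chi_1$. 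The trace is bounded from $H^1(\Omega_1)$ to $H^{\half}(\Sigma)$ with constant $1$ for the norm \eqref{H-half-norm}. Applying Young's inequality to the two linear terms gives $\|u_k^{(1)}\|_{H^1(\Omega_1)}\le C(M,\gamma,q,g)$ uniformly in $k$. The same computation shows that $\cE_0^1$ and $\cE_0$ are coercive, as referenced in the proof of Proposition~\ref{prop:min-E0}.

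\textbf{Step 2: bound on all of $\Omega$.} Since $u_k\in\cH$, Corollary~\ref{h(Omega_1)=scriptH} gives $u_k^{(2)}=\cL(u_k^{(1)})$. Proposition~\ref{prop:L-bnd}, together with Proposition~\ref{prop:Helmholtz}, then yields $\|u_k^{(2)}\|_{H^1(\Omega_2)}\le C\|u_k^{(1)}\|_{H^1(\Omega_1)}$. Hence $\sup_k\|u_k\|_{H^1(\Omega)}<\infty$. This is the step I expect to be the crux. The energy gives no control in $\Omega_2$ uniformly in $\alpha$; the only source of that control is the constraint $u_k\in\cH$.

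\textbf{Step 3: extraction and closedness.} By reflexivity, pass to a subsequence with $u_{k_j}\rightharpoonup u$ weakly in $H^1(\Omega)$. By Rellich--Kondrachov on the Lipschitz domain $\Omega$, the convergence is strong in $L^2(\Omega)$, and after a further subsequence it holds a.e. in $\Omega$. The trace map $H^1(\Omega_1)\to H^{\half}(\Sigma)$ is bounded and linear, hence weakly continuous, so the traces converge weakly in $H^{\half}(\Sigma)$. The embedding $H^{\half}(\Sigma)\hookrightarrow L^2(\Sigma)$ is compact on a compact Lipschitz boundary; alternatively one can use compactness of the trace $H^1(\Omega_1)\to L^2(\Sigma)$. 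This gives strong convergence in $L^2(\Sigma)$. Finally, $u\in\cH$ because Condition $\cA$ is preserved under weak $H^1$ limits. Its weak form reads $\int_{\Omega_2}(\nabla u_k\cdot\nabla\varphi+\beta u_k\varphi)\,\diff\bfx=0$ for all $\varphi\in H^1(\Omega_2)$ vanishing on $\Sigma$. This identity encodes both the equation and the Neumann condition on $\partial\Omega$, and it passes to the limit since it is a continuous linear functional of $u_k$. Equivalently, $\cH$ is a closed subspace of $H^1(\Omega)$, hence weakly closed.
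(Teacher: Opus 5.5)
Your proposal is correct and follows essentially the same route as the paper. Both arguments get a coercivity bound on $\Omega_1$ from Young's inequality and the trace estimate, transfer it to $\Omega_2$ through $u_k^{(2)}=\cL(u_k^{(1)})$ and the boundedness of $\cL$, extract convergent subsequences, and check that the limit lies in $\cH$. The differences are cosmetic: you extract once in $H^1(\Omega)$ and use weak closedness of $\cH$, whereas the paper extracts separately on $\Omega_1$ and $\Omega_2$ and appeals to uniqueness for the mixed problem. Your version has the small advantage that $u\in H^1(\Omega)$ is automatic, with no need to glue the two pieces.
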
	

\begin{proof}	
For each $k = 1,2,\ldots$, since $\cF_{\alpha_k}[u_k] < M < \infty$, 
by the definition of $\cF_{\alpha_k}$, 
we have $u_k \in \cH$ and 
    \begin{align*}
\cF_{\alpha_k}[u_k] =& \int_{\Omega_1} \frac{1}{2} \left( \left| \nabla u_k^{(1)} \right|^2 + \gamma \left| u_k^{(1)} \right|^2 \right)\mathrm{d}\bfx	+ \int_{\Sigma} \frac{1}{2} \kappa \left| u_k^{(1)} \right|^2 \mathrm{d}S- \langle q, u_k^{(1)} \rangle_{\Omega_1} 
    \\
& + \langle g, u_k^{(1)} \rangle_{\Sigma} + \int_{\Omega_2} \frac{1}{2} \alpha_k \left( \left| \nabla u_k^{(2)} \right|^2 + \beta \left| u_k^{(2)} \right|^2 \right) \, \mathrm{d}\bfx,
\end{align*}
where $u_k^{(1)} = \restr{u_k}{\Omega_1}$ and $u_k^{(2)} = \restr{u_k}{\Omega_2}$. 
Let $\omega \coloneqq \min \{ \gamma, 1 \} > 0$, then
\begin{align}\label{est-0}
	\int_{\Omega_1} 
	\left[ \frac{1}{2} \left( \left| \nabla u_k^{(1)} \right|^2 + \gamma \left| u_k^{(1)} \right|^2 \right) \right] \mathrm{d}\bfx \geq \frac{\omega}{2} \left\| u_k^{(1)} \right\|^2_{H^1(\Omega_1)}.
\end{align}
For any constant $b > 0$, using Young's inequality, we get
    \begin{equation}\label{est-1}
|\langle q, u_k^{(1)} \rangle| \leq \|q\|_{-1}\|u_k^{(1)}\|_{H^{1}(\Omega_1)} \leq b \left\| u_k^{(1)} \right\|_{H^1(\Omega_1)}^2 + \frac{1}{4b} \| q\|_{-1}^2, 
    \end{equation}
where $\|q\|_{-1}$ is the norm in the dual space $[H^{1}(\Omega_1)]'$.

Since the trace operator $\Tr : H^1(\Omega_1) \to H^{\half}(\Sigma)$ is continuous, there exists $C_3>0$ such that for any $w \in H^1(\Omega_1)$, we have
\begin{align}\label{trace-ineq}
	\|w\|_{H^{1/2}(\Sigma)} \leq C_3 \| w \|_{H^1(\Omega_1)}.
\end{align}
Then, by Young's inequality, we have, for any $k = 1,2,\ldots$,
\begin{equation}\label{est-2}
	\begin{split}
		| \langle g, u_k^{(1)} \rangle | \leq \|g\|_{-\half}\|u_k^{(1)}\|_{H^{1/2}(\Sigma)} &\leq C_3 \|g\|_{-\half}\|u_k^{(1)}\|_{H^{1}(\Omega_1)} 
    \\
&\leq C_3^2  b \left\| u_k^{(1)} \right\|_{H^1(\Omega_1)}^2 + \frac{1}{4b}\|g\|_{-\half}^2 ,
	\end{split}
\end{equation} 
for all $b>0$.  Combining \eqref{est-0}, \eqref{est-1} and \eqref{est-2}, we get
\begin{align*}
	M >& \cF_{\alpha_k}[u_k] 
	\geq \left( \frac{\omega}{2} - (C_3^2 + 1)b  \right) \left\| u_k^{(1)} \right\|^2_{H^1(\Omega_1)} - \frac{M_1}{4b} 
    + \frac{1}{2} \kappa \left\| u_k^{(1)} \right\|_{L^2(\Sigma)}^2 
\end{align*}
for all $k = 1,2,\ldots$, where $M_1 \coloneqq \|q\|_{-1}^2 + \|g\|_{-\half}^2$. Choose $b = \omega / (4C_3^2 + 4)$, then for all $k = 1,2,\ldots$,
    \begin{align}\label{est-4}
\left\| u_k^{(1)} \right\|_{H^1(\Omega_1)}^2 \leq \frac{4}{\omega} \left( M + \frac{(C_3^2 + 1) M_1}{\omega} \right). 
    \end{align}
It then follows by \eqref{trace-ineq} that   $\left\| u_k^{(1)} \right\|_{H^{1/2}(\Sigma)}$ is uniformly bounded (and therefore precompact in $L^{2}(\Sigma)$).  As a consequence,  there exist a subsequence $\{ u_{k_j}^{(1)} \} \subseteq \{ u_k^{(1)} \}$ and a function $u^{(1)} \in H^1(\Omega_1)$ such that as $j \to \infty$	
    \begin{align*}
u_{k_j}^{(1)} &\to u^{(1)} \text{ strongly in } L^2(\Omega_1),
    \\
\Tr(u_{k_j}^{(1)}) &\to \Tr(u^{(1)}) \text{ strongly in } L^2(\Sigma), 
    \\
u_{k_j}^{(1)} &\rightharpoonup u^{(1)} \text{ weakly in } H^1(\Omega_1),  
    \\
\Tr(u_{k_j}^{(1)}) &\rightharpoonup \Tr(u^{(1)}) \text{ weakly in } H^{1/2}(\Sigma), 
    \\
u_{k_j}^{(1)} &\to u^{(1)} \text{ a.e. in } \Omega_1. 
    \end{align*}
Since $u_k \in \cH$, by the definition of the space $\cH$, $u_{k_j}^{(2)} = \cL(u_{k_j}^{(1)})$. 
Since $\cL: H^1(\Omega_1) \to H^1(\Omega_2)$ is a bounded linear operator, using \eqref{est-4} we have that  
 $u_{k_j}^{(2)}$ is uniformly bounded in $H^1(\Omega_2)$. 
Hence, there exists a further subsequence of $\{ u_{k_j}^{(2)} \}$ (not relabeled) and a function $u^{(2)} \in H^1(\Omega_2)$  such that
    \begin{align*}
u_{k_j}^{(2)} &\to u^{(2)} \text{ strongly in } L^2(\Omega_2), 
    \\
u_{k_j}^{(2)} &\rightharpoonup u^{(2)} \text{ weakly in } H^1(\Omega_2),  
    \\
\Tr(u_{k_j}^{(2)}) &\rightharpoonup \Tr(u^{(2)}) \text{ weakly in } H^{1/2}(\Sigma),  
    \\
u_{k_j}^{(2)} &\to u^{(2)} \text{ a.e. in } \Omega_2,
    \end{align*}
as $j \to \infty$.		
Since $u_{k_j}^{(1)} \rightharpoonup u^{(1)}$ weakly in $H^1(\Omega_1)$,
by continuity of the trace operator $\Tr: H^1(\Omega_1) \to H^{\half}(\Sigma)$, 
we have $\Tr(u_{k_j}^{(1)}) \rightharpoonup \Tr(u^{(1)})$ weakly in $H^{\half}(\Sigma)$.
Passing to the limit in the weak formulation of the mixed problem in $\Omega_2$ and using uniqueness of weak solutions yields $u^{(2)}=\cL(u^{(1)})$ with
\begin{align*}
	u(\bfx) = \begin{cases}
		u^{(1)}(\bfx),  &\text{if } \bfx \in  \Omega_1, \\
		u^{(2)}(\bfx),  &\text{if } \bfx \in  \Omega_2,
	\end{cases}
\end{align*}
belonging to $\cH$. Altogether, $u$ satisfies 
\begin{align*}
	u_{k_j} &\to u \text{ strongly in } L^2(\Omega) \text{ and in } L^2(\Sigma), \\
	u_{k_j} &\rightharpoonup u \text{ weakly in } H^1(\Omega) \text{ and in }H^{1/2}(\Sigma), \\
	u_{k_j} &\to u \text{ a.e. in } \Omega,
\end{align*}
as $j \to \infty$.		
\end{proof}

\subsection{Liminf Inequality} \mbox{ }

    \begin{theorem}[Liminf Inequality]\label{thm:liminf-ineq}
Let $\{ \alpha_k \} \subset (0,1)$ be a sequence of numbers such that $\alpha_k \searrow 0$ as $k \to \infty$.  For any function $u \in L^2(\Omega)$ and any sequence $\{ u_k \} \subset L^2(\Omega)$  that satisfies $u_k \to u$ strongly in $L^2(\Omega)$ as $k \to \infty$, we have
    \begin{align}\label{liminf}
\liminf_{k \to \infty} \cF_{\alpha_k}[u_k] \geq \cF_0[u].
	\end{align}
    \end{theorem}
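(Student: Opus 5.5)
We reduce at once to the nontrivial case. If $\liminf_{k\to\infty}\cF_{\alpha_k}[u_k]=+\infty$ there is nothing to prove. Otherwise we pass to a subsequence (not relabeled) along which $\cF_{\alpha_k}[u_k]$ converges to the $\liminf$. Along it $\cF_{\alpha_k}[u_k]<M<\infty$ for some $M$ independent of $k$, so in particular $u_k\in\cH$ for every $k$.

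Theorem~\ref{thm:compactness} then gives a further subsequence and a $v\in\cH$ with the following properties:
\begin{itemize}
\item $u_{k_j}\to v$ strongly in $L^2(\Omega)$ and in $L^2(\Sigma)$;
\item $u_{k_j}\rightharpoonup v$ weakly in $H^1(\Omega)$, and the traces converge weakly in $H^{\half}(\Sigma)$.
\end{itemize}
Since also $u_k\to u$ in $L^2(\Omega)$, uniqueness of $L^2$ limits forces $v=u$. Hence $u\in\cH$ and $\cF_0[u]=\cE_0[u]$ is finite. This identification is the only place where the constraint set $\cH$ enters: the compactness theorem guarantees that $\cH$ is closed under this mode of convergence, so the limit cannot escape to the region where $\cF_0=+\infty$.

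Next we split $\cF_{\alpha_{k_j}}[u_{k_j}]$ as in the proof of Theorem~\ref{thm:compactness}:
\[
\cF_{\alpha_{k_j}}[u_{k_j}] = \cE_0[u_{k_j}] + \frac{\alpha_{k_j}}{2}\int_{\Omega_2}\left(|\nabla u_{k_j}|^2+\beta u_{k_j}^2\right)\mathrm{d}\bfx .
\]
The $\Omega_2$ term is nonnegative because $\beta\ge 0$, so we simply drop it; there is no need to show that it tends to zero. This leaves $\liminf_j \cE_0[u_{k_j}]\ge \cE_0[u]$, which we prove term by term:
\begin{itemize}
\item $\int_{\Omega_1}\frac12(|\nabla w|^2+\gamma w^2)$ is a continuous convex quadratic form on $H^1(\Omega_1)$, hence weakly lower semicontinuous, and $u_{k_j}^{(1)}\rightharpoonup u^{(1)}$ weakly in $H^1(\Omega_1)$.
\item $\frac{\kappa}{2}\int_\Sigma u_{k_j}^2\,\mathrm{d}S$ converges by the strong $L^2(\Sigma)$ convergence of traces (and $\kappa\ge0$).
\item $\langle q,u_{k_j}\rangle_{\Omega_1}$ converges because it is a bounded linear functional evaluated along a weakly convergent sequence in $H^1(\Omega_1)$.
\item $\langle g,u_{k_j}\rangle_\Sigma$ converges because the traces converge weakly in $H^{\half}(\Sigma)$.
\end{itemize}
Summing these gives $\liminf_j \cE_0[u_{k_j}]\ge\cE_0[u]$.

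Finally, since the subsequence was chosen to realize the original $\liminf$, we conclude \eqref{liminf}. The argument is routine, and the main obstacle is the identification step above, which is fully handled by the compactness theorem.
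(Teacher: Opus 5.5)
Your proof is correct and follows essentially the same route as the paper: reduce to a finite-energy subsequence, use the compactness theorem to identify the limit as an element of $\cH$, drop the nonnegative $\Omega_2$ term, and pass to the limit term by term. The differences are cosmetic: you identify $v=u$ by uniqueness of $L^2$ limits rather than by a.e.\ convergence, and you treat $\int_{\Omega_1}\frac12(|\nabla w|^2+\gamma w^2)$ as a single weakly lower semicontinuous quadratic form instead of splitting off the strongly convergent $L^2$ part.
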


    \begin{proof}	
If $\liminf_{k \to \infty} \cF_{\alpha_k}[u_k] = \infty$, then \eqref{liminf} is trivial.  Therefore, we only need to consider the case for which $\liminf_{k \to \infty} \cF_{\alpha_k}[u_k] = M_0 < \infty$.  We may choose a subsequence (not relabeled) such that $u_{k} \to u$ a.e. in $\Omega$ and $\cF_{\alpha_{k}}[u_{k}] \to M_0$ as $k \to \infty$. Consequently, $\{\cF_{\alpha_{k}}[u_{k}]\}$ is bounded, and thus $u_{k} \in \cH$, for all $k$. Then by the compactness result in Theorem~\ref{thm:compactness}, there exists a further subsequence of $\{ u_{k} \}$ (not relabeled) and a function $v \in \cH$ such that $u_{k} \rightharpoonup  v$ weakly in  $H^1(\Omega)$ as $k \to \infty$. The convergence is strong in $L^2(\Omega)$ and in $L^2(\Sigma)$.
	 
Since $u_{k} \to u$ a.e. in $\Omega$ as $k \to \infty$, we have  $u = v$ a.e. in $\Omega$, which implies that $u \in \cH$ and $u_{k}$ converges to $u$ weakly in $H^1(\Omega)$, weakly in $H^{1/2}(\Sigma)$, strongly in $L^{2}(\Omega)$ and $L^2(\Sigma)$ as $k \to \infty$.
 
Write $u_k^{(1)} = \restr{u_k}{\Omega_1}$ and $u_k^{(2)} = \restr{u_k}{\Omega_2}$. It then follows from the weak convergence, the lower semicontinuity of norms and the fact that $q\in [H^{1}(\Omega_1)]'$ that 
    \[
\liminf_{k \to \infty} \int_{\Omega_1}  \left| \nabla u_k^{(1)} \right|^2 \mathrm{d}\bfx \geq \int_{\Omega_1} \left| \nabla u^{(1)} \right|^2  \mathrm{d}\bfx
    \]
and
    \[
\lim_{k\to \infty} \langle q, u^{(1)}_k\rangle_{\Omega_1} = \langle q, u^{(1)}\rangle_{\Omega_1}. 
	\]
Moreover, the weak convergence in $H^{1/2}(\Sigma)$ and the strong convergence in $L^{2}(\Omega)$ and $L^2(\Sigma)$ imply that 
    \begin{align*}
\lim_{k \to \infty} \int_{\Omega_1} \left| u_k^{(1)} \right|^2 \mathrm{d}\bfx&=\int_{\Omega_1} \left| u^{(1)} \right|^2  \mathrm{d}\bfx, 
    \\
\lim_{k \to \infty} \int_{\Sigma}  \left| u_k^{(1)} \right|^2 dS &= \int_{\Sigma} \left| u^{(1)} \right|^2 dS,
    \\
\lim_{k \to \infty} \langle g,  u_k^{(1)}\rangle_{\Sigma}  &= \langle g,  u^{(1)}\rangle_{\Sigma}.
	\end{align*}
Combining all of the above, along with 
	\begin{align*}
\int_{\Omega_2} \frac{1}{2} \alpha_k \left( \left| \nabla u_k^{(2)} \right|^2 + \beta \left| u_k^{(2)} \right|^2 \right)  \mathrm{d}\bfx\geq 0,\quad \text{for all } k\geq 1
	\end{align*}
we obtain $\liminf_{k \to \infty} \cF_{\alpha_k}[u_k] \geq \cF_0[u]$.
    \end{proof}

\subsection{Limsup Inequality and Recovery Sequence} \mbox{ }

\begin{theorem}[Limsup Inequality]\label{thm:limsup-ineq}
	Let $\{ \alpha_k \} \subset (0,1)$ be a sequence of numbers such that $\alpha_k \searrow 0$ as $k \to \infty$. 
	For any function $u \in L^2(\Omega)$, there exists a sequence $\{ u_k \} \subset L^2(\Omega)$ such that $u_k \to u$ strongly in $L^2(\Omega)$ as $k \to \infty$, and
	\begin{align*}
		\limsup_{k \to \infty} \cF_{\alpha_k}[u_k] \leq \cF_0[u].
	\end{align*}
\end{theorem}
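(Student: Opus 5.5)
The plan is to use the constant recovery sequence $u_k \coloneqq u$ for every $k$, after splitting on whether $u$ lies in $\cH$.

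\emph{Case $u \in L^2(\Omega)\setminus\cH$.} Here $\cF_0[u] = \infty$, so the inequality holds trivially for any sequence converging to $u$. Taking $u_k = u$ is enough.

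\emph{Case $u \in \cH$.} Again take $u_k = u$, so $u_k \to u$ strongly in $L^2(\Omega)$ trivially and $u_k \in \cH$. Write $u^{(2)} = \restr{u}{\Omega_2}$, which lies in $H^1(\Omega_2)$ because $u \in \cH \subset H^1(\Omega)$. Splitting the integrals in \eqref{E_alpha-energy} over $\Omega_1$ and $\Omega_2$, and using $q_0 = q\chi_1$, gives the exact identity
\begin{align*}
\cF_{\alpha_k}[u] = \cE_0[u] + \frac{\alpha_k}{2}\int_{\Omega_2}\left( |\nabla u^{(2)}|^2 + \beta |u^{(2)}|^2 \right)\mathrm{d}\bfx .
\end{align*}
This is the same decomposition already used at the start of the proof of Theorem~\ref{thm:compactness}. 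The $\Omega_2$ integral is a finite constant independent of $k$; indeed it is bounded by $\max\{1,\beta\}\,\|\cL(\restr{u}{\Omega_1})\|_{H^1(\Omega_2)}^2$ by Proposition~\ref{prop:L-bnd}. Since $\alpha_k \to 0$, the correction term vanishes in the limit, so $\lim_{k\to\infty}\cF_{\alpha_k}[u] = \cE_0[u] = \cF_0[u]$. In particular the limsup inequality holds, in fact with equality.

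The only point needing care is checking that the duality pairing $\langle q_0, u\rangle_\Omega$ coincides with $\langle q, u^{(1)}\rangle_{\Omega_1}$. I take this as the intended meaning of $q_0 = q\chi_1$ for $q \in [H^1(\Omega_1)]'$: the action of $q$ on $\restr{u}{\Omega_1}$.

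There is no genuine obstacle here, because the restriction to $\cH$ is built into both $\cF_\alpha$ and $\cF_0$. The degenerate $\Omega_2$ energy is therefore finite and is simply multiplied by $\alpha_k$, so the constant sequence serves as the recovery sequence.
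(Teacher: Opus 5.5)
Your proof is correct and matches the paper's own proof: both use the constant sequence $u_k = u$, note that the case $\cF_0[u]=\infty$ is trivial, and for $u\in\cH$ observe that $\cF_{\alpha_k}[u]-\cF_0[u]=\frac{\alpha_k}{2}\int_{\Omega_2}\bigl(|\nabla u^{(2)}|^2+\beta|u^{(2)}|^2\bigr)\,\mathrm{d}\bfx\to 0$. Your extra remarks, bounding the $\Omega_2$ term via $\cL$ and reading $\langle q_0,u\rangle_\Omega$ as $\langle q,u^{(1)}\rangle_{\Omega_1}$, are consistent with how the paper treats these points implicitly.
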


\begin{proof}	
	In the case $u \in L^2(\Omega)$ with $\cF_0[u] = \infty$, any sequence $\{ u_k \} \subset L^2(\Omega)$ that converges strongly to $u$ in $L^2(\Omega)$ can serve as a recovery sequence. 
	Now, assume that $u \in L^2(\Omega)$ and $\cF_0[u] < \infty$, 
	then $u \in \cH$ and 
	\begin{align*}
		\cF_0[u] = \int_{\Omega_1}  \frac{1}{2} \left(\left| \nabla u^{(1)} \right|^2 + \gamma \left| u^{(1)} \right|^2 \right)\mathrm{d}\bfx + \int_{\Sigma} \frac{1}{2} \kappa \left| u^{(1)} \right|^2 \mathrm{d}S - \langle q, u^{(1)}\rangle_{\Omega_1} + \langle g, u^{(1)}  \rangle_{\Sigma},
	\end{align*}
	where $u^{(1)} = \restr{u}{\Omega_1}$. Note also that $u^{(2)} = \restr{u}{\Omega_2}\in H^{1}(\Omega_2)$. 
    
    Now  choose $u_k = u$, for all $k = 1,2,\ldots$, then $u_k \in \cH$, which implies that, for all $k \in\mathbb{N}$, 
	\begin{align*}
\cF_{\alpha_k}[u_k] =& \int_{\Omega_1}  \frac{1}{2} \left( \left| \nabla u^{(1)} \right|^2 + \gamma \left| u^{(1)} \right|^2 \right)  \mathrm{d}\bfx	+ \int_{\Sigma}\frac{1}{2} \kappa \left| u^{(1)} \right|^2dS  - \langle q, u^{(1)}\rangle_{\Omega_1} 
    \\
& + \langle g, u^{(1)}  \rangle_{\Sigma} + \int_{\Omega_2} \frac{1}{2} \alpha_k \left( \left| \nabla u^{(2)} \right|^2 + \beta \left| u^{(2)} \right|^2 \right)   \mathrm{d}\bfx. 
	\end{align*}
Since $u^{(2)} \in H^1(\Omega_2)$ and $\alpha_k \searrow 0$, then the last term goes to zero as $k \to \infty$. Therefore, 
	\begin{align*}
\limsup_{k \to \infty} \cF_{\alpha_k}[u_k] =\lim_{k \to \infty} \cF_{\alpha_k}[u_k] = \cF_0[u], 
	\end{align*}
which implies that the constant sequence serves as a recovery sequence for $u$.
    \end{proof}

    \subsection{$\Gamma$--Convergence and Convergence of Minimizers}
    
Using the general theory of $\Gamma$--convergence, which is reviewed in~\cite{Braides2002}, we can conclude that $\cF_\alpha$ $\Gamma$-converges to $\cF_0$ in the strong $L^2(\Omega)$ topology, as $\alpha \to 0$.  As a consequence of the $\Gamma$-convergence of the parametrized energy functionals, the Fundamental Theorem of $\Gamma$--convergence (see, e.g., \cite{Braides2014}, Theorem~2.1) implies that the sequence of minimizers $u_\alpha$ of $\cF_{\alpha}$, whose existence and uniqueness is guaranteed by Proposition \ref{prop:min-Ealpha}, converges strongly in $L^2(\Omega)$ to the minimizer $u_0$ of $\cF_{0}$, whose existence and uniqueness is guaranteed by Proposition \ref{prop:min-E0}. In fact, thanks to the Euler-Lagrange equations, convergence can be established using the stronger $H^{1}$-norm, and that is the goal of the next section.

    \section{Strong $H^1(\Omega)-$Convergence of the Solutions $u_\alpha$}
    \label{sec:H1-conv} 

In this section, we prove the convergence of $u_\alpha$ to $u_0$ in $H^{1}(\Omega)$, and we also establish the rate of convergence, which is exactly order one in $\alpha$, as $\alpha\searrow 0$. We first state and prove a technical lemma that is related to the well-known Normal Trace Theorem, which guarantees that, if $u\in H^{1}(U)$ and $\Delta u\in L^{2}(U)$, then $\boldsymbol{n} \cdot \nabla u \in H^{-{1/2}}(\partial U)$. 

Let $\cW$ be the set of all functions $v \in H^1(\Omega_2)$ satisfying
\begin{align*}
    -\Delta v + \beta v &= 0, \quad \text{in } \Omega_2, \\
	 \boldsymbol{n}_2 \cdot \nabla v  &= 0, \quad \text{on } \partial\Omega,
\end{align*}
in the weak sense, that is,
\begin{align*}
    \int_{\Omega_2} 
    (\nabla v \cdot \nabla\xi + \beta v \xi) \, \mathrm{d}\bfx +  \langle \boldsymbol{n}_1 \cdot \nabla v , \xi \rangle_{\Sigma}   = 0,
\end{align*}
for any $\xi \in H^1(\Omega_2)$.

    \begin{lemma}
    \label{lem:tech-1}
 There exists a constant $C > 0$, depending only on $\beta$ and $\Omega_2$, such that
    \begin{align}\label{N-bound}
\left| \langle \boldsymbol{n}_1 \cdot \nabla v ,\psi \rangle_{\Sigma}  \right| \leq C \|v\|_{H^1(\Omega_2)} \|\psi\|_{H^{\half}(\Sigma)},
    \end{align}
for any $v\in \cW$ and $\psi \in H^{\half}(\Sigma)$. 
    \end{lemma}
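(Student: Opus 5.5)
The plan is to use the weak formulation defining $\cW$ as the \emph{definition} of the normal trace pairing, and to bound it by testing with a bounded extension of $\psi$ into $\Omega_2$. Given $\psi \in H^{\half}(\Sigma)$, I first construct $\xi \in H^1(\Omega_2)$ with $\Tr(\xi) = \psi$ on $\Sigma$ and
\[
\|\xi\|_{H^1(\Omega_2)} \leq C_E \|\psi\|_{H^{\half}(\Sigma)},
\]
where $C_E$ depends only on $\Omega_2$. Any bounded right inverse of the trace on $\Sigma$ works. The most economical choice, using results already in the paper, is the Helmholtz extension $\xi = u_\psi$ from Proposition~\ref{prop:Helmholtz}, with $U=\Omega_2$, $\Gamma_1 = \Sigma$, $\Gamma_2 = \partial\Omega$ and $f = \psi$. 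This gives $\|\xi\|_{H^1(\Omega_2)} \leq C_0 \|\psi\|_{H^{\half}(\Sigma)}$. Since $\Sigma$ and $\partial\Omega$ are disjoint, with $\overline{\Omega_1}\subset\Omega$, the values of $\xi$ on $\partial\Omega$ play no role in the pairing on $\Sigma$.

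Next, I insert this $\xi$ into the weak identity satisfied by $v \in \cW$:
\[
\langle \boldsymbol{n}_1\cdot\nabla v, \psi\rangle_\Sigma = \langle \boldsymbol{n}_1\cdot\nabla v, \xi\rangle_\Sigma = -\int_{\Omega_2}(\nabla v\cdot\nabla \xi + \beta v\xi)\,\mathrm{d}\bfx .
\]
Cauchy--Schwarz then bounds the right-hand side by $\max\{1,\beta\}\|v\|_{H^1(\Omega_2)}\|\xi\|_{H^1(\Omega_2)}$, and hence by $\max\{1,\beta\} C_0 \|v\|_{H^1(\Omega_2)} \|\psi\|_{H^{\half}(\Sigma)}$. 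This gives \eqref{N-bound} with $C = \max\{1,\beta\}C_0$, which depends only on $\beta$ and $\Omega_2$.

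The main subtlety, rather than a real obstacle, is well-definedness. One must check that the pairing $\langle \boldsymbol{n}_1\cdot\nabla v,\cdot\rangle_\Sigma$ depends only on the trace of the test function, so that the choice of extension $\xi$ is irrelevant. I would handle this by linearity. If $\xi_1,\xi_2$ have the same trace on $\Sigma$, then $\eta = \xi_1-\xi_2$ vanishes on $\Sigma$, so $\eta \in H^1(\Omega_2)$ with zero trace on $\Sigma$ and arbitrary behaviour on $\partial\Omega$. Approximating $\eta$ by functions supported away from $\Sigma$ and using that $v$ weakly solves $-\Delta v+\beta v=0$ with zero Neumann data on $\partial\Omega$ shows $\int_{\Omega_2}(\nabla v\cdot\nabla\eta+\beta v\eta)=0$. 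Thus the right-hand side above defines a linear functional of $\psi$ alone, namely the normal trace. The estimate then shows that this functional lies in $H^{-\half}(\Sigma)$, with norm controlled by $\|v\|_{H^1(\Omega_2)}$.
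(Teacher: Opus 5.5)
Your proposal is correct and is essentially the paper's proof: extend $\psi$ to some $\Psi\in H^1(\Omega_2)$ with $\|\Psi\|_{H^1(\Omega_2)}\le C\|\psi\|_{H^{\half}(\Sigma)}$, test the weak formulation defining $\cW$ with it, and apply Cauchy--Schwarz. Your only departures are harmless refinements: you take the Helmholtz extension of Proposition~\ref{prop:Helmholtz} as the concrete right inverse of the trace, and you check that the pairing depends only on the trace of the test function, a point the paper leaves implicit in its definition of $\cW$.
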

    
    \begin{proof}
Pick  $v \in \cW$ and $\psi \in H^{\half}(\Sigma)$. 
    There exists a function $\Psi \in H^1(\Omega_2)$ such that 
	\begin{align*}
		\restr{\Psi}{\Sigma} = \psi, 
		\quad 	\text{and} \quad
		\|\Psi\|_{H^1(\Omega_2)} \leq C \|\psi\|_{H^{\half}(\Sigma)},
	\end{align*}
for some constant $C > 0$ depending only on $\Omega_2$. Using $\xi = \Psi$ in the weak formulation and applying the Cauchy–Schwarz inequality, we have
	\begin{align*}
		\left| \langle \boldsymbol{n}_1 \cdot \nabla v ,\psi \rangle_{\Sigma}  \right|
		&\leq \left| \int_{\Omega_2} \nabla v \cdot \nabla\Psi \, \mathrm{d}\bfx \right|
        + \beta \left| \int_{\Omega_2} v \Psi \, \mathrm{d}\bfx \right| \\
		&\leq \|\nabla v\|_{L^2(\Omega_2)} \|\nabla\Psi\|_{L^2(\Omega_2)} 
        + \beta \|v\|_{L^2(\Omega_2)} \|\Psi\|_{L^2(\Omega_2)} \\
		&\leq C  \|v\|_{H^1(\Omega_2)} \|\psi\|_{H^{\half}(\Sigma)}.
	\end{align*}
The lemma has been established.
    \end{proof}

\begin{theorem}\label{thm:H1-conv}
	Let $u_\alpha$ be the solution to \eqref{eqn:alpha-prob} and $u_0$ be the solution to Problem {\hyperlink{po}{$(P_0)$}}. 
	Then, there exists a subsequence of $\{u_\alpha\}$ that converges to $u_0$ strongly in $H^1(\Omega)$, as $\alpha \searrow 0$. 
	Moreover, the rate of convergence is at least order $\alpha$, 
	that is, there exists a constant $C > 0$ independent of $\alpha$ such that
	\begin{align*}
		\| u_\alpha - u_0 \|_{H^1(\Omega)} \leq C \alpha,
	\end{align*}
	for any $\alpha \in (0,1)$.
\end{theorem}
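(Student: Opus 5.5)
The plan is a direct energy (Galerkin-orthogonality type) estimate on the error $e_\alpha \coloneqq u_\alpha - u_0$. It exploits the fact that both $u_\alpha$ and $u_0$ lie in $\cH$: by Proposition~\ref{prop:min-Ealpha} and Proposition~\ref{prop:min-E0}, respectively. Hence $e_\alpha \in \cH$ and $e_\alpha^{(2)} = \cL(e_\alpha^{(1)})$. By Proposition~\ref{prop:L-bnd}, it therefore suffices to estimate $\|e_\alpha^{(1)}\|_{H^1(\Omega_1)}$. This already yields convergence of the whole family $\{u_\alpha\}$, not just a subsequence.

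\textbf{Step 1 (weak formulations).} Since $u_\alpha$ minimizes $\cE_\alpha$ over $H^1(\Omega)$, for every $w\in H^1(\Omega)$ we have
\begin{align*}
\int_{\Omega_1}(\nabla u_\alpha\cdot\nabla w+\gamma u_\alpha w)\,\mathrm{d}\bfx+\alpha\int_{\Omega_2}(\nabla u_\alpha\cdot\nabla w+\beta u_\alpha w)\,\mathrm{d}\bfx+\int_\Sigma\kappa u_\alpha w\,\mathrm{d}S=\langle q,w\rangle_{\Omega_1}-\langle g,w\rangle_\Sigma .
\end{align*}
Since $u_0^{(1)}$ minimizes $\cE_0^1$ (proof of Proposition~\ref{prop:min-E0}), the same identity holds for $u_0$ with the $\Omega_2$-term absent, for every $w\in H^1(\Omega_1)$, hence for restrictions of any $w\in H^1(\Omega)$. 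Subtracting the two identities and choosing $w=e_\alpha$ gives
\begin{align*}
\int_{\Omega_1}(|\nabla e_\alpha|^2+\gamma e_\alpha^2)\,\mathrm{d}\bfx+\int_\Sigma\kappa e_\alpha^2\,\mathrm{d}S=-\alpha\int_{\Omega_2}(\nabla u_\alpha\cdot\nabla e_\alpha+\beta u_\alpha e_\alpha)\,\mathrm{d}\bfx .
\end{align*}

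\textbf{Step 2 (bounding the right side).} By Cauchy--Schwarz, the right side is at most $\alpha\max\{1,\beta\}\,\|u_\alpha^{(2)}\|_{H^1(\Omega_2)}\|e_\alpha^{(2)}\|_{H^1(\Omega_2)}$. By boundedness of $\cL$, this is at most $C\alpha\|u_\alpha^{(1)}\|_{H^1(\Omega_1)}\|e_\alpha^{(1)}\|_{H^1(\Omega_1)}$. Alternatively, since $u_\alpha^{(2)}\in\cW$, one can rewrite the right side as $\alpha\langle \boldsymbol{n}_1\cdot\nabla u_\alpha^{(2)},e_\alpha\rangle_\Sigma$ and apply Lemma~\ref{lem:tech-1} together with the trace inequality \eqref{trace-ineq}; this gives the same bound. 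The left side dominates $\omega\|e_\alpha^{(1)}\|^2_{H^1(\Omega_1)}$ with $\omega=\min\{1,\gamma\}$, because $\kappa\ge0$. Dividing through yields
\begin{align*}
\|e_\alpha^{(1)}\|_{H^1(\Omega_1)}\le C\alpha\|u_\alpha^{(1)}\|_{H^1(\Omega_1)} .
\end{align*}

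\textbf{Step 3 (uniform bound; the main point to check).} We need $\|u_\alpha^{(1)}\|_{H^1(\Omega_1)}$ bounded independently of $\alpha$. Since $u_\alpha$ is the minimizer, $\cF_\alpha[u_\alpha]\le\cE_\alpha[0]=0$. The coercivity computation in the proof of Theorem~\ref{thm:compactness} (estimate \eqref{est-4}, with $M$ replaced by any fixed positive constant) then gives a bound depending only on $\gamma$, $C_3$, $\|q\|_{-1}$ and $\|g\|_{-\half}$. It does not depend on $\alpha$, because the $\Omega_2$-term in $\cE_\alpha$ is nonnegative.

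\textbf{Conclusion.} Combining Steps 2 and 3 gives $\|e_\alpha^{(1)}\|_{H^1(\Omega_1)}\le C\alpha$. Then $\|e_\alpha\|_{H^1(\Omega)}\le(1+\|\cL\|)\|e_\alpha^{(1)}\|_{H^1(\Omega_1)}\le C\alpha$ for all $\alpha\in(0,1)$. The only delicate point is making sure the subtraction in Step 1 is legitimate: $u_0$ is only known to satisfy the $\Omega_1$ Euler--Lagrange equation, so it must be tested with the restriction $e_\alpha^{(1)}$ on $\Omega_1$. Otherwise I expect no real obstacle.
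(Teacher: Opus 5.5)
Your proof is correct and follows the same skeleton as the paper's: subtract the Euler--Lagrange equations, test with $e_\alpha$ on $\Omega_1$, use coercivity with $\omega=\min\{1,\gamma\}$ and $\kappa\ge 0$, and carry the estimate into $\Omega_2$ through the boundedness of $\cL$. It differs from the paper in two places.

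First, the perturbation term. The paper writes it as the flux pairing $\alpha\langle \boldsymbol{n}_1\cdot\nabla u_\alpha^{(2)},e_\alpha^{(1)}\rangle_\Sigma$ and bounds it with Lemma~\ref{lem:tech-1} and the trace inequality \eqref{trace-ineq}. You keep it as the volume integral $-\alpha\int_{\Omega_2}(\nabla u_\alpha\cdot\nabla e_\alpha+\beta u_\alpha e_\alpha)\,\mathrm{d}\bfx$. For $u_\alpha^{(2)}\in\cW$ this is exactly how that pairing is defined, with $e_\alpha^{(2)}=\cL(e_\alpha^{(1)})$ playing the role of the extension $\Psi$. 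So you need no normal-trace lemma.

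Second, and more substantively, the uniform bound. The paper bounds $u_{\alpha_k}$ in $H^1$ by invoking the $\Gamma$-convergence result, namely $\cF_{\alpha_k}[u_{\alpha_k}]\to\cF_0[u_0]$ along a sequence $\alpha_k\searrow 0$, together with the coercivity computation of Theorem~\ref{thm:compactness}. You instead compare with the competitor $0$, so $\cE_\alpha[u_\alpha]\le 0$ for every $\alpha$. This gives the data-only bound $\|u_\alpha^{(1)}\|_{H^1(\Omega_1)}^2\le 4(C_3^2+1)(\|q\|_{-1}^2+\|g\|_{-1/2}^2)/\omega^2$.

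What your version buys: it does not depend on the $\Gamma$-convergence machinery, and it yields one constant valid for all $\alpha\in(0,1)$. The paper's sequence-based argument produces a constant along each sequence $\alpha_k\searrow 0$ and needs an extra step to reach that statement. Your version also shows that the whole family converges, not just a subsequence.

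What the paper's flux formulation buys: it makes explicit that the error in $\Omega_1$ is driven only by $\alpha$ times the exterior normal flux on $\Sigma$. It also uses only the trace of $e_\alpha^{(1)}$, not its extension into $\Omega_2$.
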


\begin{proof}
Let $\{ \alpha_k \} \subset (0,1)$ be a sequence of numbers such that $\alpha_k \searrow 0$ as $k \to \infty$. 
For each $k = 1,2,\dots$, let $u_{\alpha_k}$ be the solution to \eqref{eqn:alpha-prob} with
$\alpha$ replaced by $\alpha_k$, 
then $u_{\alpha_k}$ is the unique minimizer of $\cF_{\alpha_k}$. We also let $u_0$ be the solution to Problem~{\hyperlink{po}{$(P_0)$}}, where $u_0^{(1)} = \restr{u_0}{\Omega_1}$ and $u_0^{(2)} = \restr{u_0}{\Omega_2}$. Then, $u_0$ is the unique minimizer of $\cF_0$. Since $\cF_{\alpha_k}$ $\Gamma-$converges to $\cF_0$ as $k \to \infty$, the Fundamental Theorem of $\Gamma-$convergence  (\cite{Braides2014}, Theorem~2.1) yields
    \begin{align*}
u_{\alpha_k} \to u_0 \text{ strongly in }L^2(\Omega), \text{ and } \cF_{\alpha_k}[u_{\alpha_k}] \to \cF_0[u_0], 
	\; \text{as } k\to\infty.
    \end{align*}	
Hence, $\{ \cF_{\alpha_k}[u_{\alpha_k}] \}_{k=1}^\infty$ is a bounded sequence of finite numbers.  Set $u_{\alpha_k}^{(1)} \coloneqq \restr{u_{\alpha_k}}{\Omega_1}$ and $u_{\alpha_k}^{(2)} \coloneqq \restr{u_{\alpha_k}}{\Omega_2}$. Noting that  $u_{\alpha_k}^{(2)} = \cL(u_{\alpha_k}^{(1)})$, we have that $u_{\alpha_k}^{(2)}\in \cW$. 
Using a similar argument as in the proof of Theorem~\ref{thm:compactness}  we also have that $u_{\alpha_k}$ is a bounded sequence in $H^{1}(\Omega)$. 

Now, for each $k\in\mathbb{N}$, define the errors
\begin{align*}
	e_{\alpha_k} &\coloneqq u_{\alpha_k} - u_0 \in H^1(\Omega), \\
	e_{\alpha_k}^{(1)} &\coloneqq \restr{e_{\alpha_k}}{\Omega_1} = u_{\alpha_k}^{(1)} - u_0^{(1)} \in H^1(\Omega_1), \\
	e_{\alpha_k}^{(2)} &\coloneqq \restr{e_{\alpha_k}}{\Omega_2} = u_{\alpha_k}^{(2)} - u_0^{(2)} \in H^1(\Omega_2). 
\end{align*}
These sequences of functions are all bounded in their respective domains and the sequence $e_{\alpha_k}^{(1)}$ satisfies
    \begin{align*}
-\Delta e_{\alpha_k}^{(1)} + \gamma e_{\alpha_k}^{(1)} &= 0, \quad \text{in } \Omega_1, 
    \\
- \boldsymbol{n}_1 \cdot \nabla e_{\alpha_k}^{(1)}  &= \kappa e_{\alpha_k}^{(1)}  - \alpha  \boldsymbol{n}_1 \cdot \nabla u_{\alpha_k}^{(2)} , 
	\quad \text{on } \Sigma = \partial\Omega_1.
    \end{align*}
Hence, $e_{\alpha_k}^{(1)}$ satisfies the weak formulation
\begin{align*}
	\int_{\Omega_1} \left(\nabla e_{\alpha_k}^{(1)} \cdot \nabla\phi + \gamma e_{\alpha_k}^{(1)} \phi \right) \mathrm{d}\bfx+ \int_{\Sigma} \kappa e_{\alpha_k}^{(1)} \phi \; dS
	= \alpha_k \langle \nabla u_{\alpha_k}^{(2)} \cdot \boldsymbol{n}_1,  \phi \rangle_{\Sigma},
\end{align*}
for any $\phi \in H^1(\Omega_1)$. 
Choosing $\phi = e_{\alpha_k}^{(1)}$, we get
\begin{align}
	\label{error-est-1}
	\int_{\Omega_1} \left( \left|\nabla e_{\alpha_k}^{(1)}\right|^2 + \gamma \left|e_{\alpha_k}^{(1)}\right|^2 \right) \mathrm{d}\bfx
	+ \int_{\Sigma} \kappa \left|e_{\alpha_k}^{(1)}\right|^2 dS
	= \alpha_k \langle \nabla u_{\alpha_k}^{(2)} \cdot \boldsymbol{n}_1,  e_{\alpha_k}^{(1)} \rangle_{\Sigma}. 
\end{align}
On the other hand, since $u_{\alpha_k}^{(2)} \in \cW$ and $\Tr(e_{\alpha_k}^{(1)}) \in H^{\half}(\Sigma)$, 
by Lemma~\ref{lem:tech-1} and the definition of $H^{\half}(\Sigma)-$norm in \eqref{H-half-norm}, we have
\begin{equation}
	\label{error-est-2}
    \begin{split}
	\left| \langle  \boldsymbol{n}_1 \cdot \nabla u_{\alpha_k}^{(2)} ,  e_{\alpha_k}^{(1)} \rangle_{\Sigma} \right| 
    &\leq C \left\| u_{\alpha_k}^{(2)} \right\|_{H^1(\Omega_2)}  \left\| e_{\alpha_k}^{(1)} \right\|_{H^{\half}(\Sigma)} \\
    &\leq C \left\| u_{\alpha_k}^{(2)} \right\|_{H^1(\Omega_2)}  \left\| e_{\alpha_k}^{(1)} \right\|_{H^1(\Omega_1)}.	    
	\end{split}
\end{equation}
Let $\omega \coloneqq \min\{ \gamma, 1 \} > 0$, 
then \eqref{error-est-1} and \eqref{error-est-2} yield
\begin{align*}
	\omega \left\| e_{\alpha_k}^{(1)} \right\|_{H^1(\Omega_1)}^2 
	\leq \alpha_k C \left\| u_{\alpha_k}^{(2)} \right\|_{H^1(\Omega_2)}  \left\| e_{\alpha_k}^{(1)} \right\|_{H^1(\Omega_1)},
\end{align*}
which implies that
\begin{align}\label{error-est-3}
	\left\| e_{\alpha_k}^{(1)} \right\|_{H^1(\Omega_1)} \leq \alpha_k \frac{C}{\omega} \left\| u_{\alpha_k}^{(2)} \right\|_{H^1(\Omega_2)}
	< \alpha_k  C.  
\end{align}
Since $e_{\alpha_k}^{(2)} = \cL(e_{\alpha_k}^{(1)})$,
combining \eqref{error-est-3} with the boundedness of $\cL$, we get
    \begin{align}\label{error-est-4}
\left\| e_{\alpha_k}^{(2)} \right\|_{H^1(\Omega_2)} < \alpha_k C .
    \end{align}
Putting together \eqref{error-est-3} and \eqref{error-est-4}, there exists a constant $C$ independent of $k$ such that  
\begin{align*}
	\| u_{\alpha_k} - u_0 \|_{H^1(\Omega)}
    = \| e_{\alpha_k} \|_{H^1(\Omega)} \leq C \alpha_k, 	
\end{align*}
for each $k\in\mathbb{N}$, 
which implies that 
$u_{\alpha_k} \to u_0$ strongly in $H^1(\Omega)$ as $k \to \infty$,
and the rate of convergence is at least first order. 
\end{proof}

\section{Numerical Simulations in 1D} 
    \label{sec:numerical}

To verify the order of convergence for the one-dimensional sharp-interface problem, we manufacture an exact solution to Problem~{\hyperlink{po}{$(P_0)$}} using fixed parameters and solve \eqref{eqn:alpha-prob-1D} for a sequence of $\alpha \in (0,1)$. Using a second-order cell-centered finite difference discretization, we compute the error relative to the manufactured limiting solution, as well as the Cauchy differences between solutions corresponding to successive values of $\alpha$.

For our 1D problem, we redefine the domains:
    \begin{align*}
\Omega\coloneqq(-1,1), \quad \Omega_1=\Omega_R\coloneqq(0,1), \quad \Omega_2=\Omega_L\coloneqq(-1,0). 
    \end{align*}
In this one-dimensional experiment, the geometry is a slight modification of the setting used in the preceding analysis: the interface is $\Sigma=\{0\}$, the outer boundary is $\Gamma_{\mathrm{out}}=\{-1,1\}$, and $\Omega_1=(0,1)$ touches $\partial\Omega$ at $x=1$.

We choose the function
    \begin{align}
    \label{true-sln-1D}
u_0(x) = 
    \begin{cases}	 
u^{(R)}_0(x) = \left(2-\dfrac{6\pi}{3\pi+1}x^2\right)\left(1+\sin(6\pi x)\right),&  \text{if }x\in\Omega_R,
    \\
u^{(L)}_0(x) = \dfrac{2\cosh(x+1)}{\cosh(1)},&  \text{if }x\in\Omega_L, 
    \end{cases}
    \end{align}
which is an exact solution to the slightly simplified one-dimensional problem
    \begin{equation}
    \tag{$P_0-\mathrm{1D}$} 
    \begin{aligned}
-\frac{\diff^2 u^{(R)}_0}{\diff x^2} + \gamma u^{(R)}_0 &= q, \quad &&\mbox{in }\Omega_R,
    \\
-\frac{\diff^2 u^{(L)}_0}{\diff x^2} + \beta u^{(L)}_0 &= 0, \quad &&\mbox{in }\Omega_L,
    \\
u_0^{(R)} &= u_0^{(L)}, \quad &&\text{at }x=0,
    \\
\frac{\diff u_0^{(R)}}{\diff x} &= \kappa u_0^{(R)} + g, \quad &&\text{at }x=0,
    \\
\frac{\diff u_0^{(L)}}{\diff x} &= 0, \quad &&\text{at }x=-1,
    \\
\frac{\diff u_0^{(R)}}{\diff x} &= 0, \quad &&\text{at }x=1.
    \end{aligned}
    \end{equation}
where
    \[
\label{data1D}
\beta=1, \quad \gamma=2, \quad \kappa=3, \quad g=12\pi-6.
    \]
The forcing is chosen from the manufactured right-hand solution:
    \[
q(x) := -\frac{\diff^2 u_0^{(R)}}{\diff x^2} + 2u_0^{(R)}(x).
    \]
Equivalently, with
    \[
a:=\frac{6\pi}{3\pi+1}, \qquad k:=6\pi,
    \]
we have
    \[
q(x)
=
\bigl[4+2a(1-x^2)\bigr]\bigl(1+\sin(kx)\bigr)
+4akx\cos(kx)
+k^2(2-ax^2)\sin(kx).
    \]
The functions $u_0^{(L)}$ and $u_0^{(R)}$ are continuous in $\Omega_L$ and $\Omega_R$, respectively, and match at $x=0$, since $u_0^{(L)}(0)=u_0^{(R)}(0)=2$, ensuring that $u_0$ is globally continuous across $\Omega$.

The one-dimensional $\alpha$-regularized problem is
    \begin{equation}
    \tag{$P_\alpha-\mathrm{1D}$} 
    \label{eqn:alpha-prob-1D}
    \begin{aligned}
-\frac{\diff^2 u^{(R)}_\alpha}{\diff x^2} + \gamma u^{(R)}_\alpha &= q, \quad &&\mbox{in }\Omega_R,
    \\
-\alpha \frac{\diff^2 u^{(L)}_\alpha}{\diff x^2} + \alpha \beta u^{(L)}_\alpha &= 0, \quad &&\mbox{in }\Omega_L,
    \\
u_\alpha^{(R)} &= u_\alpha^{(L)}, \quad &&\text{at }x=0,
    \\
\frac{\diff u_\alpha^{(R)}}{\diff x} - \alpha\frac{\diff u_\alpha^{(L)}}{\diff x} &= \kappa u_\alpha^{(R)} + g, \quad &&\text{at }x=0,
    \\
\frac{\diff u_\alpha^{(L)}}{\diff x} &= 0, \quad &&\text{at }x=-1,
    \\
\frac{\diff u_\alpha^{(R)}}{\diff x} &= 0, \quad &&\text{at }x=1.
    \end{aligned}
    \end{equation}
To numerically solve Problem \eqref{eqn:alpha-prob-1D} for $\alpha > 0$, we implement a second-order cell-centered finite difference approximation, so that the derivative discontinuity always occurs at a cell edge in the mesh. The behavior of the numerical solution $u_\alpha$ as it approaches the limit $u_0$ is visualized in Figure~\ref{fig:Sol_WholeDomain}.

To quantify the convergence rate with respect to the parameter $\alpha$, define
    \[
e_\alpha:=u_\alpha-u_0,
\qquad
d_k:=u_{\alpha_{k+1}}-u_{\alpha_k}.
    \]
We compute the discrete $L^2$ and $L^\infty$ norms and a reconstructed $H^1$ norm of $e_\alpha$ and $d_k$. Figure~\ref{fig:Convergence_Alpha} displays the convergence results on a log-log scale.

As shown in the left panel of Figure~\ref{fig:Convergence_Alpha}, the absolute error scales linearly with $\alpha$ throughout the plotted range. A least-squares fit over the ten largest values of $\alpha$ gives slopes $0.9956$ in the $L^2$, $L^\infty$, and $H^1$ norms. The corresponding Cauchy-difference slopes are $0.9925$ in all three norms. At the smallest value $\alpha=10^{-5}$, the spatial discretization errors are only $3.0\%$, $3.4\%$, and $6.4\%$ of the plotted $L^2$, $L^\infty$, and $H^1$ errors, respectively. Thus, the refined mesh resolves the smallest-$\alpha$ signal, and the computations support the first-order convergence $O(\alpha)$ proved above.


\begin{figure}[htb!]
	\centering
	\includegraphics[width=\linewidth]{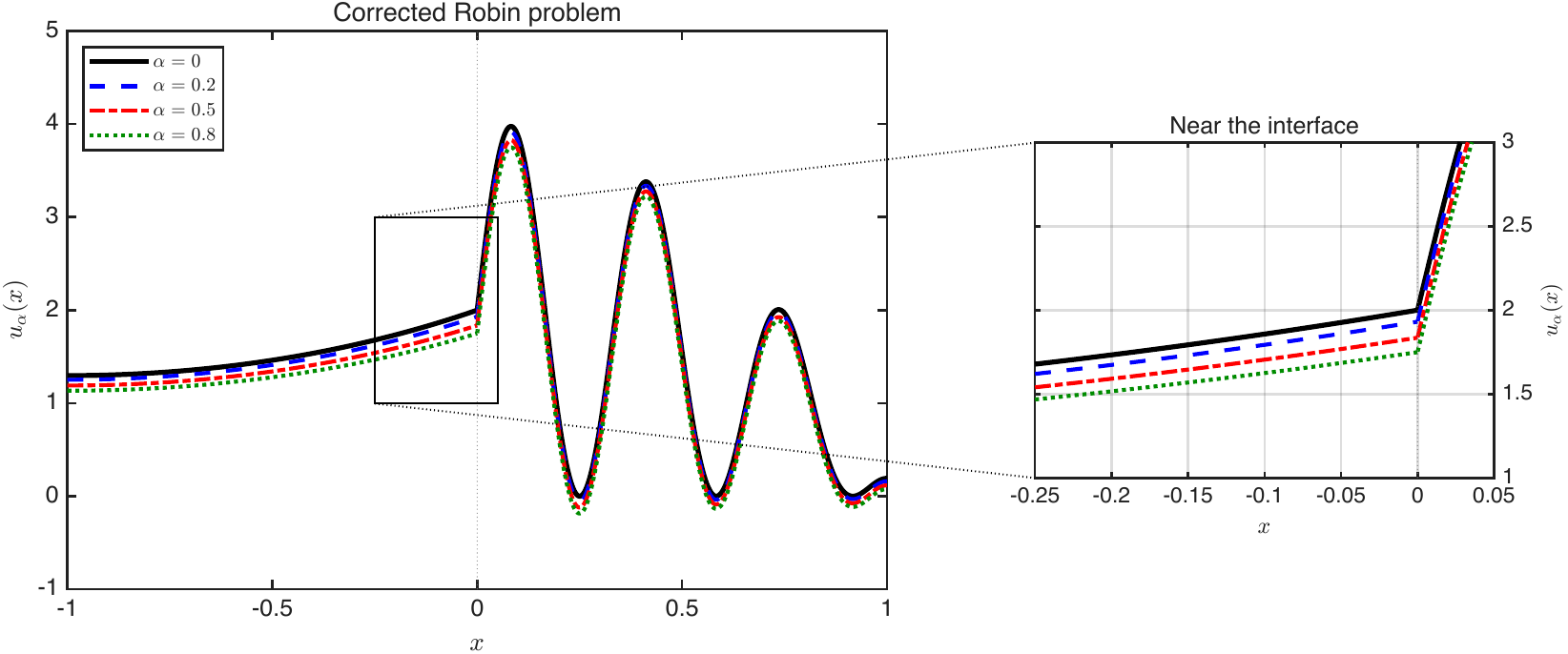}
		\caption{The manufactured limiting solution $u_0$ ($\alpha=0$) and cell-centered finite difference approximations $u_\alpha$ for $\alpha \in \{0.2,0.5,0.8\}$, computed with $N=10{,}000$ cells for the corrected Robin transmission condition. The zoomed view shows the separation among the curves near the interface $x=0$.}
	\label{fig:Sol_WholeDomain}
\end{figure}

\begin{figure}[htb!]
	\centering
	\includegraphics[width=\linewidth]{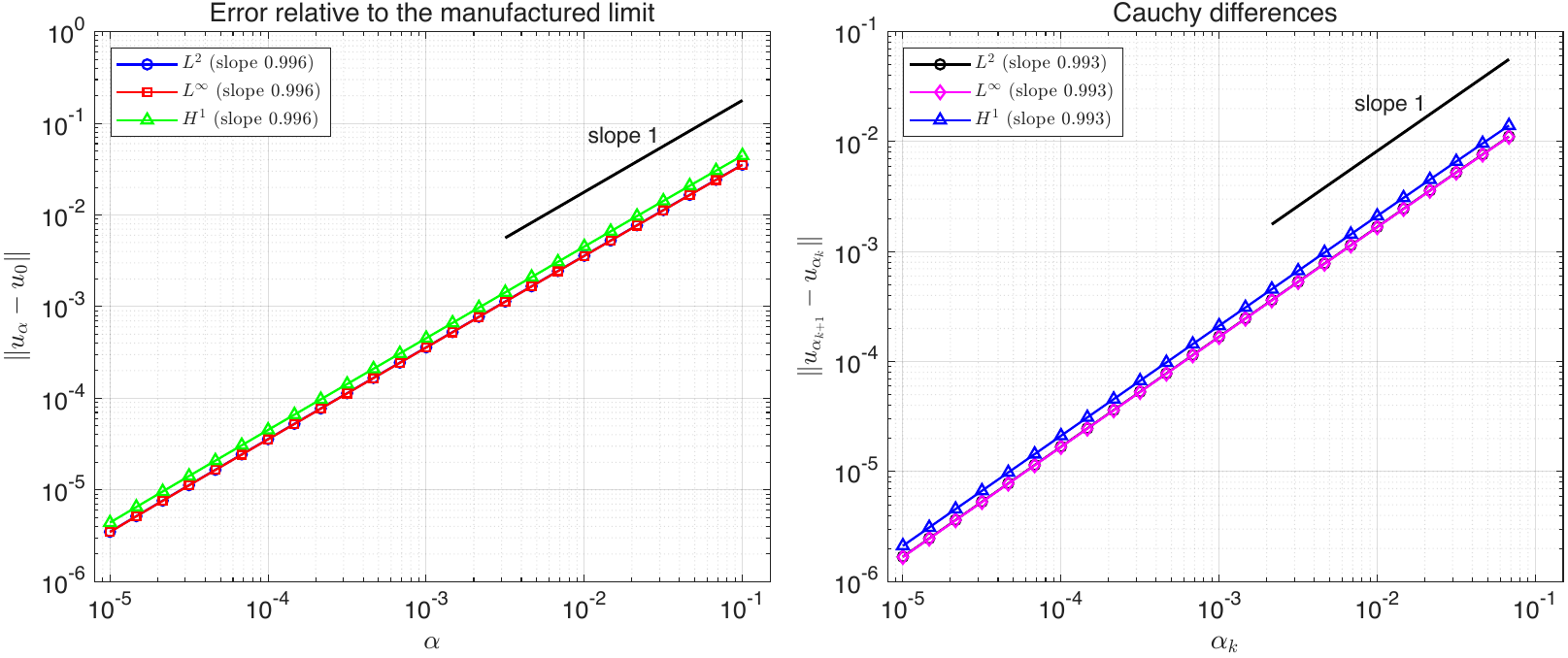}
		\caption{Convergence as $\alpha\to0$ for $\beta=1$, $\gamma=2$, $\kappa=3$, and $g=12\pi-6$, using $N=100{,}000$ cells and $25$ logarithmically spaced values $\alpha\in[10^{-5},10^{-1}]$. Left: $L^2$, $L^\infty$, and reconstructed $H^1$ errors relative to the manufactured limiting solution $u_0$. Right: the corresponding Cauchy differences. Fits over the ten largest $\alpha$ values give slopes approximately $0.996$ for the absolute errors and $0.993$ for the Cauchy differences.}
	\label{fig:Convergence_Alpha}
\end{figure}

\FloatBarrier

   \section{Conclusions}

In this paper, we studied the $\alpha$-Limit Problem associated with a family of (almost) degenerate elliptic interface transmission problems arising naturally in the analysis of the Two-Parameter Diffuse Domain Method (DDM2p). Although the $\alpha$-regularized problem admits a standard variational formulation for every $\alpha>0$, the singular limit $\alpha\searrow 0$ produces an interface system whose variational structure is non-standard and subtle. We show that the limiting problem is not naturally posed over the full Sobolev space $H^1(\Omega)$ but, rather, over a closed Hilbert subspace, $\mathcal{H}$, defined by an auxiliary Helmholtz extension problem on the exterior (annular) domain. After identifying the correct limiting energy functional, we establish $\Gamma$-convergence of the parameterized energies in the strong $L^2(\Omega)$ topology and prove convergence of minimizers to the unique minimizer associated with the limiting problem. Using the associated Euler–Lagrange equations, we strengthen this result by proving strong convergence in $H^1(\Omega)$ and derive a convergence rate of order $O(\alpha)$. Numerical experiments in one dimension support the theoretical predictions and suggest that the predicted convergence rate is sharp. This work resolves the $\alpha$-Limit Problem and establishes the target limiting structures needed for a broader program studying simultaneous asymptotic limits in DDM2p approximations.

The analysis developed in this paper provides the foundation for a new methodology/philosophy for approximating solutions to elliptic boundary value problems on complicated domains via the diffuse domain technique. Our method, DDM2p, 
as the name suggests, employs two regularization parameters in constructing the diffuse domain approximation problem. Specifically, it relies on a step that was not explicitly present in previous formulations of the diffuse domain method, namely, $\alpha$-regularization. This feature is important because it leads the analyst to explicitly consider and confront what problem is being approximated in the annular domain $\Omega_2$ that buffers the complicated domain of interest, $\Omega_1$. In their computations, most, if not all, previous works have in practice used the $\alpha$-regularization step -- typically by setting $\alpha$ to be some fixed positive number much smaller than one -- but have not explicitly addressed the rigorous mathematical analysis associated with this choice. This step is necessary because otherwise the resulting stiffness matrices contain diagonal elements that become exponentially small. As we have indicated, the identification of the $\alpha$-regularization step and the rigorous analysis of the $\alpha$-Limit Problem give us the proper framework within which to analyze our fully practical Two-Parameter Diffuse Domain Method. These analyses will be the subject of a series of forthcoming papers.

   \bibliographystyle{plain}
   \bibliography{GammaAlpha}

    \end{document}